\title{PERMANENCE PROPERTIES OF SPLINTERS VIA ULTRAPOWER}
\newcommand{\fa}{\mathfrak{a}}
\newcommand{\fm}{\mathfrak{m}}
\newcommand{\fM}{\mathfrak{M}}
\newcommand{\fp}{\mathfrak{p}}
\newcommand{\fq}{\mathfrak{q}}
\newcommand{\ft}{\mathfrak{t}}
\newcommand{\Spec}{\operatorname{Spec}}
\newcommand{\Spl}{\operatorname{Spl}}
\newcommand{\cO}{\mathcal{O}}
\newcommand{\cU}{\mathcal{U}}
\newcommand{\bA}{\mathbf{A}}
\newcommand{\bF}{\mathbf{F}}
\newcommand{\bQ}{\mathbf{Q}}
\newcommand{\Tag}[1]{\href{https://stacks.math.columbia.edu/tag/#1}{\texttt{#1}}}
\newcommand{\citestacks}[1]{\cite[Tag \Tag{#1}]{Stacks}}
\newcommand{\citetwostacks}[2]{\cite[Tags \Tag{#1} and \Tag{#2}]{Stacks}}
\author{Shiji Lyu}
\newtheorem{Thm}{Theorem}[section]
\newtheorem{Lem}[Thm]{Lemma}
\newtheorem{Cor}[Thm]{Corollary}
\newtheorem{Prop}[Thm]{Proposition}
\theoremstyle{definition}
\newtheorem{Def}[Thm]{Definition}
\begin{document}
\begin{abstract}
   We show that the splinter property ascends along regular residue field extensions, and along arbitrary regular maps in equal characteristic.
   We also study the splinter property of non-Noetherian rings, especially those related to ultrapowers, to the extent necessary for our main results.
\end{abstract}
\maketitle
\section{Introduction}
\

A Noetherian ring is called a splinter if it is reduced and is a direct summand of every finite extension, see Definition \ref{def:splinter} and Lemma \ref{lem:SplinterNormal} below.
Definition being simple, the properties of this notion are usually difficult to prove or understand.
The direct summand theorem,
that a regular Noetherian ring is a splinter, is only recently fully proved, see the celebrated works {\citeleft\citen{Hoc73}\citepunct\citen{And18}\citepunct\citen{Bh18}\citeright}.

The natural question to ask next is that if the splinter property ascends along a regular map.
Built on recent advances \citeleft\citen{DTetale}\citepunct\citen{DTopen}\citeright,
we show that this is true in equal characteristic (Theorem \ref{thm:RegCharp});
In mixed characteristic, we show that the splinter property ascends along a ``regular residue field extension'' (Theorem \ref{thm:ScalarFoverR}).

\begin{Thm}\label{thm:RegCharp}
Let $S\to A$ be a regular homomorphism of Noetherian rings.
Assume that $S$ contains a field. If $S$ is a splinter, so is $A$.
\end{Thm}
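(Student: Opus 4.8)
The plan is to split on the characteristic of a field inside $S$, dispose of equal characteristic $0$ by hand, and in characteristic $p$ reduce — via Néron–Popescu desingularization and a spreading-out argument — to the statement that the splinter property ascends along smooth morphisms, which I would in turn reduce to the $F$-finite case through an ultrapower.

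\emph{Equal characteristic $0$.} Here $S$, and hence $A$, is a $\bQ$-algebra, and the splinter property coincides with normality: splinters are normal by Lemma~\ref{lem:SplinterNormal}, while a normal Noetherian $\bQ$-algebra is a finite product of normal domains, and a module-finite extension $R\hookrightarrow B$ of a normal domain $R$ of characteristic $0$ splits. Indeed, picking a minimal prime $\fq$ of $B$ with $\fq\cap R=0$ (possible because $R\hookrightarrow B$) gives a module-finite extension $R\hookrightarrow B/\fq$ of the normal domain $R$ by a domain, and in characteristic $0$ a suitable rational multiple of the field trace carries $B/\fq$ into $R$ and restricts to the identity on $R$; composing with $B\twoheadrightarrow B/\fq$ splits $R\hookrightarrow B$. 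Since $S\to A$ is regular it is flat with geometrically normal fibres, so $A$ is normal once $S$ is, and we are done. From now on $S$ and $A$ are $\bF_p$-algebras.

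\emph{Reduction to smooth morphisms.} By Popescu's theorem write $A=\varinjlim_\lambda A_\lambda$ with each $A_\lambda$ smooth of finite type over $S$. Flatness over the reduced ring $S$ with geometrically reduced fibres makes $A$ reduced, so only the summand condition is at issue. A module-finite extension $A\hookrightarrow B$ is finitely presented as an algebra and as a module ($A$ being Noetherian), hence descends to a module-finite extension $A_\lambda\hookrightarrow B_\lambda$ with $B=B_\lambda\otimes_{A_\lambda}A$ for some large $\lambda$, and an $A_\lambda$-linear splitting of $B_\lambda$ base-changes to one of $B$; so it suffices to show that every smooth finite-type $S$-algebra $T$ is a splinter. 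On a finite affine open cover $\Spec T=\bigcup_i\Spec T[1/f_i]$ the structure morphism factors as $S\to S[x_1,\dots,x_n]\to T[1/f_i]$ with the second arrow étale; since $T\to\prod_i T[1/f_i]$ is then faithfully flat, since a finite product of splinters is a splinter, and since the splinter property descends along faithfully flat maps (for finite $R\hookrightarrow B$ the image of $\operatorname{Hom}_R(B,R)\to R$ is an ideal whose equality to the unit ideal may be checked after a faithfully flat base change), it is enough to prove: (i) if $S$ is a splinter then so is $S[x_1,\dots,x_n]$; and (ii) the splinter property ascends along étale morphisms of Noetherian $\bF_p$-algebras.

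\emph{The $F$-finite reduction, and the main obstacle.} Both (i) and (ii) are within reach for $F$-finite base rings, using \cite{DTetale,DTopen} and the direct summand theorem \cite{Hoc73,And18,Bh18}, but a general Noetherian splinter is far from $F$-finite, and closing this gap is the heart of the paper. The plan is: to a Noetherian $\bF_p$-algebra $R$ attach a faithfully flat extension $R\to R^{*}$ that is $F$-finite in the non-Noetherian sense developed here; prove that $R^{*}$ inherits the (non-Noetherian) splinter property from $R$; check that $(-)^{*}$ commutes with polynomial and étale base change; run the $F$-finite forms of (i) and (ii) for $R^{*}$; and descend along the faithfully flat maps $R[x]\to R[x]^{*}$ and $T\to T^{*}$. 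I expect the main obstacle to be exactly the construction of $R^{*}$ together with the transfer of the splinter property up $R\to R^{*}$: one wants $R^{*}$ simultaneously faithfully flat over $R$, carrying a finite Frobenius, Noetherian-like enough that the $F$-finite arguments of \cite{DTetale,DTopen,Hoc73,And18,Bh18} survive, and compatible with the above base changes, and then the splinter property must be pushed across $R\to R^{*}$ by the appropriate instance of {\L}o\'s's theorem together with faithfully flat descent — precisely the non-Noetherian and ultrapower analysis the abstract promises. The remaining pieces — the characteristic-$0$ step, Popescu plus spreading out, and the closing faithfully flat descent — are routine by comparison.
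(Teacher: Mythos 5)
Your reductions up to the point where you isolate (i) polynomial ascent and (ii) \'etale ascent are sound and roughly parallel the paper: Popescu plus Lemma~\ref{lem:DlimSplinters}, the local structure of smooth morphisms, and the characteristic-$0$ case via normality (Lemma~\ref{lem:normalQ}) all check out, and (ii) is exactly Theorem~\ref{thm:SplinterEtale}. The gap is in (i).

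You assert that polynomial ascent is ``within reach'' for $F$-finite base rings using \cite{DTetale,DTopen} and the direct summand theorem, but none of those sources proves that $S[x]$ is a splinter when $S$ is an $F$-finite Noetherian splinter; this is essentially the content of the theorem you are trying to prove. What \cite{DTopen} supplies is \emph{openness} of the splinter locus (under $F$-finiteness or essentially-finite-type-over-complete-local hypotheses), not ascent. The paper closes this gap not by reducing to an $F$-finite situation but by a topological argument: $\Spl(S[Y])$ is open (Theorem~\ref{thm:OPENsmoverFpure}), it is invariant under all translations $Y\mapsto Y+a$, and it meets the closed fiber $V(\fm_S S[Y])$ because $S[Y]_{\fm_S S[Y]}$ is a splinter by Theorem~\ref{thm:ScalarFoverR}. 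A nonempty open translation-invariant subset of $\bA^1_{k_S}$ with $k_S$ infinite is everything, and an induction on $\dim S$ finishes the proof. None of this appears in your sketch.

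There is also a directional problem with the proposed $F$-finite reduction $R\to R^*$. Pushing the splinter property \emph{up} a faithfully flat map is exactly the kind of ascent statement the direct summand theorem and Theorem~\ref{thm:RegCharp} are about; it is not automatic, and {\L}o\'s's theorem does not give it, because ``is a splinter'' is not a first-order property and ultrapowers of $R$ are generally not $F$-finite. In the paper the ultrapower appears only in the \emph{descent} direction (Lemmas~\ref{lem:RegIntoUltrapower} and \ref{lem:ultrapowerFlat}): a Noetherian ring $S'$ is shown to admit a pure map into an ultrapower of $S$, which is a splinter because it is a localization of a product of copies of $S$ (Lemmas~\ref{lem:SplinterProduct} and \ref{lem:SplinterUltraprod}), and $S'$ inherits the property by Lemma~\ref{lem:FlatDescentSplinters}. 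The only genuinely $F$-finite auxiliary ring in the paper is the one in Lemma~\ref{lem:extendFpure}/Theorem~\ref{thm:OPENsmoverFpure}, and it is used solely to transfer \emph{finiteness} of the family of trace ideals $\Sigma_A$ via uniformly $F$-compatible ideals, not to transfer the splinter property itself. So the heart of the theorem is missing from your proposal, and the intended strategy for supplying it does not run.
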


Taking $S=\bQ$ or $\bF_p$ for a prime number $p$, we obtain the direct summand theorem in equal characteristic \cite[Theorem 2]{Hoc73}.
\begin{Cor}\label{cor:DirectSummand}
Let $A$ be a regular Noetherian ring that contains a field.
Then $A$ is a splinter.
\end{Cor}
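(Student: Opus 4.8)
The plan is to obtain this as an immediate application of Theorem~\ref{thm:RegCharp} with $S$ a prime field. First I would convert the ``contains a field'' hypothesis into an explicit algebra structure over a prime field: if $k\subseteq A$ is a field, then either $\operatorname{char}k=p>0$, in which case $\bF_p\subseteq k\subseteq A$, or $\operatorname{char}k=0$, in which case $\bQ\subseteq k\subseteq A$; setting $S=\bF_p$ or $S=\bQ$ accordingly, the structure map $S\to A$ is an injection from a field, in particular flat. Note that $S$, being a field, is trivially a splinter---it is reduced and, for any module-finite extension $S\to B$ with $B\neq 0$, the map is injective (the kernel is a proper ideal of the field $S$) and splits as a map of $S$-modules---and of course $S$ contains a field, namely itself.

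Next I would verify that $S\to A$ is a regular homomorphism. Flatness is automatic since $S$ is a field, so only geometric regularity of the fibers is at issue; $\Spec S$ is a single point and the fiber over it is $A$ itself, so one must show that $A$ is geometrically regular over $S$, i.e.\ that $A\otimes_S S'$ is regular for every finite field extension $S'/S$. Here I would invoke that the prime field $S$ is perfect, so every such $S'/S$ is separable, together with the standard fact that base change of a regular Noetherian ring along a finite separable field extension remains regular. Hence $A$ regular implies $S\to A$ regular.

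With these checks in place, Theorem~\ref{thm:RegCharp} applied to the regular map $S\to A$, with $S$ a splinter containing a field, yields that $A$ is a splinter. The argument is entirely routine once Theorem~\ref{thm:RegCharp} is available; the only step carrying any content is the passage from regularity of $A$ over the perfect prime field to geometric regularity over it, and even that is a textbook commutative-algebra statement rather than a genuine obstacle.
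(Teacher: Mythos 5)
Your proof is correct and is exactly the paper's argument: take $S$ to be the prime field $\bQ$ or $\bF_p$ contained in $A$ and apply Theorem~\ref{thm:RegCharp}. The extra verifications you supply (that the prime field is a splinter, and that regularity of $A$ over a perfect field gives geometric regularity) are the routine checks the paper leaves implicit.
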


The following theorem is valid in arbitrary characteristic and
has no excellence hypothesis.
In view of
\cite[Lemma 2.1.3]{DTetale},
this theorem generalizes {\cite[Theorem A]{DTetale}}, a theorem used in the proof.
We also note that taking $S'$ to be the completion of $S$ we obtain an alternative proof of {\cite[Theorem C]{DTetale}}.

\begin{Thm}\label{thm:ScalarFoverR}
Let $(S,\fm)\to (S',\fm')$ be a regular homomorphism of Noetherian local rings with $\fm'=\fm S'$.
If $S$ is a splinter, so is $S'.$
\end{Thm}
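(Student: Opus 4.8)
The plan is to verify the splinter property of $S'$ by hand, reducing in stages to one genuinely new input — that adjoining ``generic'' variables to a splinter again yields a splinter — which is then obtained via ultrapowers. To start, recall that a splinter is normal (Lemma~\ref{lem:SplinterNormal}) and that normality ascends along regular homomorphisms, so $S'$ is a normal Noetherian local domain; write $\kappa=S/\fm$ and $\kappa'=S'/\fm'$, so $\kappa'/\kappa$ is a separable field extension (a field is regular over a subfield precisely when the extension is separable). We must show that every module-finite injective extension $S'\hookrightarrow T'$ is $S'$-split, and by the standard reductions for splinters it is enough to treat $T'$ a domain.

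The main reduction is a d\'evissage on $\kappa'/\kappa$. A colimit argument (via N\'eron--Popescu desingularization) together with the fact — which I would establish as part of the non-Noetherian theory — that suitable filtered colimits of splinters along faithfully flat maps are splinters reduces us to $\kappa'/\kappa$ finitely generated, hence separably generated; fix a separating transcendence basis $t_1,\dots,t_d\in\kappa'$. Lifting the $t_i$ to $S'$ and localizing presents $S\to S'$ as a composite
\[
  S \longrightarrow S_1:=S[t_1,\dots,t_d]_{\fm S[t_1,\dots,t_d]} \longrightarrow S',
\]
where the first map is essentially smooth with maximal ideal $\fm S_1$ and residue field $\kappa(t_1,\dots,t_d)$, and the second is a regular local homomorphism with $\fm S_1\cdot S'=\fm'$ (using $\fm'=\fm S'$) and with \emph{separable algebraic} residue extension $\kappa'/\kappa(t_1,\dots,t_d)$. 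The second map is of the kind handled by \cite[Theorem~A]{DTetale} (via \cite[Lemma~2.1.3]{DTetale}), so it suffices to prove that $S_1$ is a splinter.

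Everything thus comes down to: \emph{if $(S,\fm)$ is a splinter then $S[t_1,\dots,t_d]_{\fm S[t_1,\dots,t_d]}$ is a splinter.} Here I would first replace $S$ by its strict henselization $S^{\mathrm{sh}}$, which is again a splinter by \cite[Theorem~A]{DTetale} and over which the claim descends faithfully flatly (note $\fm S^{\mathrm{sh}}=\fm^{\mathrm{sh}}$); so we may assume $\kappa$ infinite. Fix a nonprincipal ultrafilter $\cU$ on a large index set $I$ and form the ultrapower $S^{\star}=S^{I}/\cU$. Having shown (this is the advertised non-Noetherian content) that an ultrapower of a splinter is a splinter and that splinters are stable under localization, we know $(S^{\star})_{\fm S^{\star}}$ is a splinter. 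Since $S$ is a domain so is $S^{\star}$, and since $\kappa$ is infinite the residue field $\kappa^{\star}=S^{\star}/\fm S^{\star}$ contains elements transcendental over $\kappa$; choosing $\tau_1,\dots,\tau_d\in S^{\star}$ that lift algebraically independent transcendentals gives an $S$-algebra embedding $S[t_1,\dots,t_d]\hookrightarrow S^{\star}$, $t_i\mapsto\tau_i$, which extends to a local homomorphism $S_1\to(S^{\star})_{\fm S^{\star}}$. Its induced map on residue fields is the field extension $\kappa(t_1,\dots,t_d)\hookrightarrow\kappa^{\star}$, which is flat, so one checks that $S_1\to(S^{\star})_{\fm S^{\star}}$ is faithfully flat, hence pure; as its target is a splinter, so is $S_1$, and the proof is complete.

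The real work, and the point I expect to be the main obstacle, is this last step and the non-Noetherian theory it invokes: that an ultrapower of a Noetherian splinter is a splinter in a workable sense — which demands both a robust definition of ``splinter'' outside the Noetherian world and a {\L}o\'s-type transfer — that the notion is stable under localization and under the filtered colimits used in the d\'evissage, and that the comparison map into $(S^{\star})_{\fm S^{\star}}$ is faithfully flat (which uses that $S$ is a domain and that the $\tau_i$ behave generically). The separability of $\kappa'/\kappa$ — that is, the regularity hypothesis — enters twice and essentially: to split the d\'evissage into a separable algebraic part (the known \'etale case) and a purely transcendental part, and, after the passage to $S^{\mathrm{sh}}$, to ensure the new variables can be matched with genuinely transcendental elements of an ultrapower's residue field, which is exactly what forces the comparison map to be flat. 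Handling residue characteristic $p$ correctly throughout, in particular the reduction to an infinite residue field, is the delicate point.
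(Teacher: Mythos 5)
Your approach is genuinely different from the paper's. The paper passes to strict Henselizations of both $S$ and $S'$, then uses Popescu's theorem together with the strict Henselianness of $S$ to build an $S$-algebra map from all of $S'$ into a single ultrapower $S_\natural$ of $S$ (Lemma~\ref{lem:RegIntoUltrapower}: sections of the smooth $S$-algebras $A_i$ are patched via an ultrafilter), and shows this map is pure by a completion argument using $\fm'=\fm S'$ (Lemma~\ref{lem:ultrapowerFlat}). You instead attempt a d\'evissage of $\kappa'/\kappa$: split off a localized polynomial ring $S_1$ carrying a transcendence basis, handle the residually separable algebraic piece $S_1\to S'$ by \cite[Theorem A]{DTetale}, and treat $S_1$ by mapping the new variables to ``generic'' elements of an ultrapower. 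Your ultrapower construction is the naive one (send $t_i$ to transcendentals), not the Popescu-based patching of sections; in exchange you rely on reducing the residue field extension to one of finite type.

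That reduction is the genuine gap. Popescu gives $S'=\mathrm{colim}_i A_i$ with $A_i$ smooth over $S$, and indeed the residue fields of the localizations $(A_i)_{\fp_i}$ (with $\fp_i$ the preimage of $\fm'$) are finitely generated over $\kappa$; but these local rings typically fail the hypothesis $\fm(A_i)_{\fp_i}=\fp_i(A_i)_{\fp_i}$ under which the finitely generated case would apply. Whenever $\fp_i$ is not the generic point of its component of the closed fiber $A_i\otimes_S\kappa$, the closed fiber of $(A_i)_{\fp_i}$ has positive dimension, and passing to a cofinal subsystem does not visibly cure this (the transition maps are not flat and $\fp_{i'}$ can acquire new generators). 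Two further points need care: the claim that $S_1\to S^\star$ is \emph{faithfully flat} overreaches --- $S^\star$ is not finitely generated, so the local flatness criterion does not apply directly; Lemma~\ref{lem:ultrapowerFlat} is deliberately phrased to produce only purity, via the $\fm S_1$-adic completion and \citestacks{0AGW}, and purity is what you actually need. And the $\tau_i$ must be arranged to be algebraically independent over $\operatorname{Frac}(S)$, not merely to have residues independent over $\kappa$, for $S[\underline t]\hookrightarrow S^\star$ to be injective. The paper's more unified construction, which embeds $S'$ wholesale into an ultrapower after one strict Henselization, avoids both the d\'evissage and, consequently, the reduction-to-finite-type problem.
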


The following corollary is the special case where $S$ is a G-ring, in view of a theorem of Andr\'e (\cite{Andre} or \citestacks{07PM}).
We will explain this in detail after the proof of Theorem \ref{thm:ScalarFoverR} for the reader's convenience.
Readers not familiar with the notion of a G-ring can refer to \citetwostacks{07GG}{07QS}.
We mention that all excellent rings are G-rings by definition, and that for a Noetherian local ring being a G-ring is the same as being quasi-excellent, see \cite[(34.A)]{Matsumura}.

\begin{Cor}\label{cor:ScalarG}
Let $(S,\fm,k)\to (S',\fm',k')$ be a flat homomorphism of Noetherian local rings.
Assume that $\fm'=\fm S'$, that $k'/k$ is separable,
and that $S$ is a G-ring.
If $S$ is a splinter, so is $S'$.
\end{Cor}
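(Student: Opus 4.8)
The plan is to reduce to Theorem~\ref{thm:ScalarFoverR} by proving that, under these hypotheses, the homomorphism $S\to S'$ is automatically regular. Granting this, since $\fm'=\fm S'$ is assumed, Theorem~\ref{thm:ScalarFoverR} applies verbatim and shows that $S'$ is a splinter. Thus the entire content of the corollary is the implication: if $S\to S'$ is flat with $\fm'=\fm S'$ and $k'/k$ separable and $S$ is a G-ring, then $S\to S'$ is regular. This is precisely the input supplied by the theorem of Andr\'e (\cite{Andre}, \citestacks{07PM}) alluded to above, and I sketch below how it is assembled.

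Recall first that a Noetherian local ring is a G-ring if and only if it is quasi-excellent (\cite[(34.A)]{Matsumura}), so the completion $S\to\widehat S$ is regular. Next, flatness of $S\to S'$ together with $\fm'=\fm S'$ implies, via the infinitesimal criterion for flatness (note that $\widehat{S'}=\varprojlim S'/\fm^nS'$ and that each $S'/\fm^nS'=S'\otimes_S S/\fm^n$ is flat over $S/\fm^n$), that $\widehat S\to\widehat{S'}$ is flat and local, with $\fm\widehat{S'}=\fm'\widehat{S'}=\fm_{\widehat{S'}}$ and with residue field extension $k\to k'$, which is separable. For such a map between \emph{complete} Noetherian local rings, regularity follows from Cohen's structure theorem: a separable residue field extension lifts to an extension of coefficient rings which is a regular ring map, and $\widehat{S'}$ is obtained from $\widehat S$ by completed base change along it; this complete case is in any event subsumed in the cited theorem of Andr\'e. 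Hence $\widehat S\to\widehat{S'}$ is regular, and composing with the regular map $S\to\widehat S$ --- a composition of regular homomorphisms being regular --- shows that $S\to\widehat{S'}$ is regular. Finally $S'\to\widehat{S'}$ is faithfully flat, so for each prime $\fp\subseteq S$ the base change $\kappa(\fp)\otimes_S S'\to\kappa(\fp)\otimes_S\widehat{S'}$ is faithfully flat; since the target is geometrically regular over $\kappa(\fp)$ (being a fiber of the regular map $S\to\widehat{S'}$) and regularity of Noetherian rings descends along faithfully flat ring maps, the source is geometrically regular over $\kappa(\fp)$ as well. Therefore $S\to S'$ is flat with geometrically regular fibers, hence regular.

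The step carrying all the content is the complete case --- equivalently, the cited input from Andr\'e's theorem; everything else is routine manipulation of the permanence properties of regular homomorphisms, after which Theorem~\ref{thm:ScalarFoverR} closes the argument. That the G-ring hypothesis cannot be dropped is visible already from the completion map $S\to\widehat S$ of a non-excellent Noetherian local ring $S$: it is flat with $\fm_{\widehat S}=\fm\widehat S$ and trivially separable residue field extension, yet it is not regular.
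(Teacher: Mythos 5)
Your proof is correct and takes essentially the same route as the paper's: reduce to showing $S\to S'$ regular so that Theorem~\ref{thm:ScalarFoverR} applies, pass to the commutative square of completions, invoke Andr\'e's theorem (Stacks \Tag{07PM}, together with \Tag{0322} to pass from ``separable'' to ``geometrically regular'') for the complete local map $\widehat S\to\widehat{S'}$, and then descend regularity of $S\to S'$ from $S\to\widehat{S'}$ along the faithfully flat map $S'\to\widehat{S'}$. The only difference is that you spell out the flatness of $\widehat S\to\widehat{S'}$ and the fiberwise descent of geometric regularity, which the paper delegates to the Stacks Project (\Tag{07QI}, \Tag{07NT}).
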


Here, a separable extension of fields does not need to be algebraic, finitely generated, or separably generated; see \citetwostacks{030O}{0322}.\\

We now discuss the strategy to prove the theorems above. See Section \ref{sec:proofs} for explicit arguments.
For Theorem \ref{thm:ScalarFoverR},
we dominate the ring $S'$ with an ultrapower of $S$ which we introduce and study in an ad-hoc fashion in Section \ref{sec:Ultraproduct}.
For the splinter property to be well-defined and preserved, we need to study non-Noetherian splinters in Section \ref{sec:splinters}, which is partially done in \cite{ADVal}.
In this section we also establish an openness result (Theorem \ref{thm:OPENsmoverFpure}) parallel to the main result of \cite{DTopen}. 
For Theorem \ref{thm:RegCharp}, we can reduce to the case of a polynomial algebra as observed in \cite{DTetale},
whose symmetry we can exploit with Theorem \ref{thm:ScalarFoverR} and
the openness result established in Section \ref{sec:splinters}.
\\

We close our introduction by announcing that using similar methods (but with more technical preparations and struggles) the author is able to establish Theorem \ref{thm:ScalarFoverR}
for birational derived splinters and openness and regular ascent for $F$-pure birational derived splinters.
Combined with \cite{Kovacs}, the same results hold if we replace birational derived splinters by rational singularities,
once we make a good definition.
This is work in preparation.
\\

\textsc{Acknowledgements}. We thank Takumi Murayama for helpful discussions and comments on a draft.
We thank Rankeya Datta and Linquan Ma for various communications, including reading a draft and give comments, as well as inspirations
that helped significantly simplify the current exposition.
We also thank Kevin Tucker for helpful discussions.
\section{Splinters}\label{sec:splinters}

\begin{Def}[see {\cite[Definition 5.2.2]{ADVal}}]
\label{def:splinter}
A ring
$S$ is a \emph{splinter} if every finite and finitely presented ring map $S\to T$
that induces a surjective map of spectra
splits as a map of $S$-modules.
Equivalently, $S$ is a splinter if
every finite ring map $S\to T$ that
induces a surjective map of spectra is pure, see  \cite[Lemma 5.2.1]{ADVal}.
Here, and everywhere in this article, ``pure'' means ``universally injective'' as in
\citestacks{058I}.
\end{Def}

The next two lemmas are included in \cite[Proposition 5.2.5]{ADVal}.

\begin{Lem}
\label{lem:FlatDescentSplinters}
Let $S\to S'$ be a pure (for example faithfully flat) ring map.
If $S'$ is a splinter, so is $S$.
\end{Lem}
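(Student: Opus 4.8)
\emph{Proof proposal.} The plan is to reduce the statement to descent of purity (universal injectivity) along the given pure map. By the equivalent formulation of the splinter property recorded in Definition \ref{def:splinter}, it suffices to take an arbitrary finite ring map $S\to T$ for which $\Spec T\to\Spec S$ is surjective and prove that $S\to T$ is pure as a map of $S$-modules.

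First I would base change along $S\to S'$, setting $T'=T\otimes_S S'$. Finiteness is stable under base change, and $\Spec T'\to\Spec S'$ is the base change of $\Spec T\to\Spec S$; since surjectivity of a morphism of schemes is preserved under arbitrary base change (for $s'\in\Spec S'$ lying over $s\in\Spec S$ and $t\in\Spec T$ over $s$, the fibre over $s'$ is $\Spec(T\otimes_S\kappa(s'))$, which is nonempty because it has the nonzero ring $\kappa(t)\otimes_{\kappa(s)}\kappa(s')$ as a quotient), the map $\Spec T'\to\Spec S'$ is again surjective. As $S'$ is a splinter, $S'\to T'$ is pure over $S'$.

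Next I would descend purity, the key point being the elementary observation that universal injectivity descends along an arbitrary pure ring map, with no flatness needed. Concretely, if $f\colon M\to N$ is a map of $S$-modules such that $f\otimes_S S'$ is universally injective over $S'$, then for any $S$-module $Q$ the composite
\[
M\otimes_S Q \longrightarrow (M\otimes_S Q)\otimes_S S'
\xrightarrow{\ (f\otimes_S\mathrm{id}_Q)\otimes_S S'\ }
(N\otimes_S Q)\otimes_S S'
\]
is injective: the first arrow because $S\to S'$ is pure, and the second because it is identified with $(f\otimes_S S')\otimes_{S'}(Q\otimes_S S')$. Hence $f\otimes_S\mathrm{id}_Q$ is injective, so $f$ is universally injective. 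Applying this to the structure map $f\colon S\to T$, whose base change along $S\to S'$ is the pure map $S'\to T'$, shows that $S\to T$ is pure; therefore $S$ is a splinter.

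The content here is minimal, so the only point requiring care is that both inputs — preservation of surjectivity of spectra and descent of universal injectivity — go through for an arbitrary pure (not necessarily faithfully flat) map $S\to S'$. Neither uses flatness, which is exactly what makes the statement work in the generality claimed. (Alternatively, one may simply invoke \cite[Proposition 5.2.5]{ADVal}.)
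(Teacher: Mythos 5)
Your argument is correct, and it is the natural one. The paper itself does not spell out a proof of this lemma; it simply refers to \cite[Proposition 5.2.5]{ADVal}. Your proof is the standard direct argument: (i) base change the finite surjective-on-spectra map $S\to T$ along $S\to S'$, (ii) use that surjectivity of spectra is stable under base change and that $S'$ is a splinter to conclude $S'\to T'$ is pure, and (iii) descend universal injectivity along the pure map $S\to S'$ using the factorization
\[
M\otimes_S Q \longrightarrow (M\otimes_S Q)\otimes_S S' \longrightarrow (N\otimes_S Q)\otimes_S S',
\]
together with the identification $(M\otimes_S Q)\otimes_S S'\cong(M\otimes_S S')\otimes_{S'}(Q\otimes_S S')$. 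All of this is sound and genuinely requires nothing beyond purity of $S\to S'$, as you emphasize. One small wording slip: in the base-change step, $\kappa(t)\otimes_{\kappa(s)}\kappa(s')$ is not a \emph{quotient} of $T\otimes_S\kappa(s')$ (passing to $\kappa(t)$ involves a localization as well), but it does receive a ring map from $T\otimes_S\kappa(s')$ and is nonzero, which is all you need to conclude the fibre is nonempty. If you prefer, you can simply cite the stability of surjectivity under base change (e.g.\ \citestacks{01S1}) rather than reargue it.
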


\begin{Lem}
\label{lem:DlimSplinters}
A direct limit of splinters is a splinter.
\end{Lem}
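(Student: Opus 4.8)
The plan is to reduce to the splinter property of the rings in the system by a routine limit argument. Write $S=\varinjlim_{i\in I}S_i$ as a filtered colimit (over a directed set) of splinters $S_i$, with canonical maps $S_i\to S$, and let $S\to T$ be a finite and finitely presented ring map inducing a surjective map of spectra. We must produce an $S$-linear retraction of $S\to T$.

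First I would descend the witness algebra. Since $T$ is of finite presentation over $S$ and $S=\varinjlim_i S_i$, there is an index $i$ and a finitely presented $S_i$-algebra $T_i$ together with an isomorphism $T_i\otimes_{S_i}S\cong T$ of $S$-algebras; after increasing $i$ we may moreover arrange that $S_i\to T_i$ is finite. (These are standard limit statements; see \citestacks{01ZM} and the surrounding results.) Next I would descend surjectivity of the spectral map: $\Spec T_i\to\Spec S_i$ is a morphism of finite presentation whose base change $\Spec T\to\Spec S$ is surjective, so by the limit formalism for morphisms of finite presentation there is $j\geq i$ with $\Spec(T_i\otimes_{S_i}S_j)\to\Spec S_j$ surjective. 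Replacing $i$ by such a $j$ and $T_i$ by $T_i\otimes_{S_i}S_j$, we have arranged that $S_i\to T_i$ is a finite and finitely presented ring map inducing a surjection on spectra, with $T_i\otimes_{S_i}S\cong T$ as $S$-algebras.

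Since $S_i$ is a splinter, the structure map $S_i\to T_i$ splits as a map of $S_i$-modules: there is an $S_i$-linear map $\rho\colon T_i\to S_i$ with $\rho(1)=1$. Base changing along $S_i\to S$ yields an $S$-linear map $\rho\otimes_{S_i}S\colon T\to S$ taking $1$ to $1$, and by construction (compatibility of the structure maps $S\to T$ and $S_i\to T_i$ under the isomorphism $T_i\otimes_{S_i}S\cong T$) it is a retraction of $S\to T$. Hence $S$ is a splinter, as desired.

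I do not anticipate a serious obstacle. The one point worth flagging is that the transition maps in the system are arbitrary ring maps, not assumed flat or pure, so Lemma \ref{lem:FlatDescentSplinters} does not apply and one genuinely has to descend a splitting; correspondingly, the essential role of the hypothesis that $T$ is \emph{finitely presented} over $S$ (and not merely finite) is exactly in making the descent of the algebra $T_i$ and of the surjectivity of $\Spec$ possible.
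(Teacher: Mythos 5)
Your proof is correct. Note that the paper does not actually reproduce an argument for this lemma; it simply refers to \cite[Proposition 5.2.5]{ADVal}, so the spreading-out proof you give is a self-contained version of what is being cited. The three descent steps you perform are all standard limit statements for finitely presented morphisms in the Stacks Project (descending a finitely presented algebra, descending finiteness, and descending surjectivity on spectra; the last is \citestacks{07RR}, which the paper itself uses in Lemma~\ref{lem:FFPSLocalize}), and your closing remark correctly identifies why finite presentation is essential and why one must genuinely descend the splitting rather than try to use Lemma~\ref{lem:FlatDescentSplinters}.
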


\begin{Lem}\label{lem:SplinterProduct}
Let $X$ be an index set, $S_x\ (x\in X)$ a family of splinters.
Then the direct product $S=\prod_x S_x$ is a splinter.
\end{Lem}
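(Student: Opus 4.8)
The plan is to reduce the splitting of a given finite, finitely presented ring map $S\to T$ with surjective spectral map to the splitting of the corresponding maps over the factors $S_x$, after first decomposing $T$ itself as a product. I would begin by recording the basic observation that for the product ring $S=\prod_{x\in X}S_x$ every finitely presented $S$-module $M$ is canonically isomorphic to $\prod_x M_x$ with $M_x:=S_x\otimes_S M$: choosing a presentation $S^m\xrightarrow{A}S^n\to M\to 0$ and using that arbitrary products of modules form an exact functor, one obtains an exact sequence $\prod_x S_x^m\to\prod_x S_x^n\to\prod_x M_x\to 0$ whose first two terms are canonically $S^m$ and $S^n$ and whose first map is once more $A$, so $\prod_x M_x=\operatorname{coker}(A)=M$. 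Since the given $T$ is finite and finitely presented as an $S$-algebra it is in particular a finitely presented $S$-module (a standard characterization of finite, finitely presented ring maps), so the canonical ring homomorphism $T\to\prod_x T_x$ — the product of the quotient maps $T\to T/J_xT$, where $J_x:=\ker(S\to S_x)=\prod_{y\ne x}S_y$ — is bijective, hence a ring isomorphism; moreover each $T_x\cong T\otimes_S S_x$ is a finite and finitely presented $S_x$-algebra, being a base change of $T$.

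Second, I would verify that each $\Spec T_x\to\Spec S_x$ is surjective. Since $S\to S_x$ is surjective with kernel $J_x$, one may view $\Spec S_x=V(J_x)\subseteq\Spec S$ and $\Spec T_x=\Spec(T/J_xT)=V(J_xT)\subseteq\Spec T$, compatibly with the maps down to $\Spec S$. Given $\fp_x\in\Spec S_x$ with image $\fp\in V(J_x)$, surjectivity of $\Spec T\to\Spec S$ produces $\fq\in\Spec T$ lying over $\fp$; as $J_x\subseteq\fp$, the prime $\fq$ contains $J_xT$, hence lies in $V(J_xT)=\Spec T_x$, and it must map to $\fp_x$ because $\Spec S_x\to\Spec S$ is injective.

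Finally, the claim follows: each $S_x$ is a splinter and $S_x\to T_x$ is finite, finitely presented, and surjective on spectra, so it admits an $S_x$-linear retraction $\psi_x\colon T_x\to S_x$; then $\psi:=\prod_x\psi_x\colon T=\prod_x T_x\to\prod_x S_x=S$ is $S$-linear and restricts to the identity on $S$, which is the desired splitting. The one step I expect to require genuine care is the first: finite presentation is indispensable there, because tensor products and polynomial algebras do \emph{not} commute with infinite products, and the decomposition works precisely because a finitely presented module is the cokernel of a map between finite free modules, and such cokernels \emph{do} commute with arbitrary products. One must also bear in mind that $\Spec(\prod_x S_x)$ is strictly larger than $\coprod_x\Spec S_x$, which is why the surjectivity transfer has to be made through the closed subsets $V(J_x)$ rather than through a naive splitting of $\Spec S$.
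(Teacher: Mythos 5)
Your proof is correct and takes essentially the same approach as the paper: base change the given finite, finitely presented map $S\to T$ along each projection $S\to S_x$, use the splinter property of $S_x$ to get a retraction $T_x\to S_x$, and combine these by taking the product. The only difference is cosmetic: you prove the identification $T\cong\prod_x T_x$ directly (via commutation of products with cokernels of maps of finite free modules) and verify surjectivity of $\Spec T_x\to\Spec S_x$ by hand through the closed immersion $V(J_x)$, whereas the paper simply notes that the split map $S\to\prod_x T_x$ factors through $\varphi$ (so the full isomorphism is not strictly needed) and cites the Stacks Project for the identity $\psi=\varphi$.
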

\begin{proof}
Let $\varphi:S\to T$ be a
finite and finitely presented ring map that induces a surjective map of spectra.
We need to show $\varphi$ splits.

Let $p_x:S\to S_x$ be the projection, and $\varphi_x:S_x\to T_x$ the base change of $\varphi$ along $p_x$.
Then $\varphi_x$ is
a finite and finitely presented ring map that induces a surjective map of spectra.
Since $S_x$ is a splinter, $\varphi_x$ splits as a map of $S_x$-modules.
Taking product, we see that the map $\psi:S=\prod_x S_x\to\prod_x T_x$ splits as a map of $S$-modules and canonically factors through $\varphi$, thus $\varphi$ splits, as desired. (Actually $\psi=\varphi$, see \citestacks{059K}.)
\end{proof}

The next few lemmas are probably well-known in the Noetherian case, but not necessarily in this generality. We give proofs.

\begin{Lem}\label{lem:FFPSLocalize}
Let $A$ be a ring, $\fp\in\Spec(A)$.
Let $A_\fp\to B_1$ be a finite and finitely presented ring map  that induces a surjective map of spectra.
Then there exists
a finite and finitely presented ring map $A\to B$ that induces a surjective map of spectra such that $B_\fp=B_1$.
\end{Lem}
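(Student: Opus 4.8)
The plan is to spread out the data of $B_1$ over $A$ by a standard limit argument, and then correct for surjectivity of the spectral map. First I would use that $A_\fp = \operatorname{colim}_{f\notin\fp} A_f$, a filtered colimit of $A$-algebras, together with the fact that finitely presented algebras are finitely presented objects in the category of $A$-algebras (\citestacks{00QO}); this produces some $g\notin\fp$ and a finitely presented $A_g$-algebra $C$ with $C\otimes_{A_g}A_\fp \cong B_1$. By a further limit argument (\citestacks{01ZM}, descent of finiteness, or just the explicit presentation of a finite algebra) I may shrink $g$ so that $A_g\to C$ is module-finite as well as finitely presented. Replacing $A$ by $A_g$ is harmless for the conclusion since $B$ built over $A_g$ extends to $A$ by taking the same presentation (or by composing $A\to A_g\to C$, which is again finite and finitely presented as $A_g$ is even of finite presentation over $A$). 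So I may assume there is a finite, finitely presented $A\to C$ with $C_\fp \cong B_1$.

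The remaining issue is that $\Spec(C)\to\Spec(A)$ need not be surjective even though $\Spec(C_\fp)\to\Spec(A_\fp)$ is: the image of $\Spec(C)$ is a closed subset $V(I)$ (finite maps are closed), and the hypothesis only tells us $V(I)$ contains $\Spec(A_\fp)$, i.e. every prime contained in $\fp$ lies in $V(I)$. The fix is to enlarge $C$ by a second finite, finitely presented $A$-algebra whose spectrum hits the rest of $\Spec(A)$ without disturbing the localization at $\fp$. Concretely, take $I \subseteq A$ the kernel of $A\to C$; then $I_\fp = 0$, so for each of the finitely many generators $a_1,\dots,a_n$ of $I$ there is $s_i\notin\fp$ with $s_i a_i = 0$. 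Hence there is a single $s\notin\fp$ with $sI = 0$, i.e. $I \subseteq (0 :_A s)$ and $I A_s = 0$. Now set $B = C \times (A/sA)$. This is finite and finitely presented over $A$ (a product of two such, cf.\ the argument in Lemma \ref{lem:SplinterProduct} and \citestacks{059K}), and $\Spec(B) = \Spec(C) \sqcup \Spec(A/sA) \to \Spec(A)$ is surjective: a prime $\fq$ of $A$ either contains $s$, in which case $\fq \in V(sA) = \operatorname{image}(\Spec(A/sA))$, or does not contain $s$, in which case $sI=0$ forces $I\subseteq\fq$, so $\fq\in V(I) = \operatorname{image}(\Spec(C))$. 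Finally, localizing at $\fp$ kills the second factor since $s$ becomes a unit, so $B_\fp = C_\fp \times (A/sA)_\fp = B_1 \times 0 = B_1$, as required.

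I expect the main obstacle to be purely bookkeeping: assembling the standard ``limit/spreading out'' citations so that one genuinely gets a \emph{finite and finitely presented} $A$-algebra (not merely finite, or merely finitely presented) restricting to $B_1$ after localization, and checking that the disjoint-union trick $B = C\times(A/sA)$ preserves finite presentation over $A$ while fixing the surjectivity defect. Neither step is deep; the only mild subtlety is that surjectivity of $\Spec$ is not a ``finitely presented'' condition that automatically spreads out, which is exactly why the explicit correction by the complementary closed subscheme $V(sA)$ is needed rather than a naive shrinking of $\Spec(A)$.
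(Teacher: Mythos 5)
Your high-level plan is the same as the paper's: spread $B_1$ out to a finite, finitely presented $A$-algebra restricting to $B_1$ at $\fp$, then repair surjectivity by taking a product with a complementary quotient $A/sA$ (the paper uses $B_0 \times A/fA$). However, both halves of your argument have genuine gaps.

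\textbf{Gap in the spread-out.} You produce a finite, finitely presented $A_g$-algebra $C$ with $C_\fp \cong B_1$, and then claim you may pass to an $A$-algebra because ``$A\to A_g\to C$ is again finite and finitely presented as $A_g$ is even of finite presentation over $A$.'' This is false: $A_g$ is of finite \emph{presentation} over $A$ but never \emph{module-finite} unless $g$ is a unit or nilpotent, so the composite $A\to C$ is not a finite ring map. Your alternative suggestion --- ``taking the same presentation'' --- does not obviously produce a finite algebra either, because clearing denominators in the relations destroys monicity; one must first rescale the generators $x_i\mapsto g^{N_i}x_i$ so that the integral equations for the $x_i$ have coefficients in $A$, and only then clear denominators in the remaining relations. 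This can be made to work, but it is not bookkeeping; the paper sidesteps the issue entirely by passing to the integral closure $B'$ of $A$ in $B_1$, extracting a finite $A$-subalgebra $B''$ with $B''_\fp=B_1$, and presenting $B''$ as a filtered colimit of finite \emph{and} finitely presented $A$-algebras with surjective transitions (\citestacks{09YY}).

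\textbf{Gap in the surjectivity fix.} You set $I=\ker(A\to C)$ and claim $I_\fp=0$. This is wrong in general: a finite ring map inducing a surjection on spectra need not be injective when the source is not reduced (e.g.\ $A_\fp=k[\epsilon]/\epsilon^2 \to B_1=k$). One only has $I_\fp\subseteq\operatorname{nil}(A_\fp)$. Moreover, you take ``the finitely many generators $a_1,\dots,a_n$ of $I$'' --- but $A$ is not assumed Noetherian, and kernels of maps to finitely presented algebras need not be finitely generated. The correct fix is Chevalley: since $A\to C$ is finite and of finite presentation, the image of $\Spec(C)\to\Spec(A)$ is a \emph{constructible} closed set, hence equals $V(a_1,\dots,a_n)$ for finitely many $a_i$, and then your $s$-argument goes through after replacing ``$s_ia_i=0$'' by ``$s_ia_i^{m_i}=0$.'' This constructibility/spreading-out step is exactly what the paper's citation of \citestacks{07RR} provides in one stroke. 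So your idea of where the correction must come from is right, but the two specific claims you lean on ($I_\fp=0$ and finite generation of $I$) fail, and the paper's route through \citestacks{07RR} is what makes the fix precise.
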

\begin{proof}
Let $B'$ be the integral closure of $A$ in $B_1$.
Then $B_1=B'_\fp$ by \citestacks{0307}.
Since $B_1$ is finite over $A_\fp$, ther exists a finite $A$-subalgebra $B''$ of $B'$ such that $B''_\fp=B_1.$
Write $B''=\mathrm{colim}_iB''_i$ where each $B''_i$ is finite and finitely presented over $A$ and the transition maps are surjective,
see \citestacks{09YY}.
In particular $B''_i\to B''$ is surjective for each $i$.
For large enough $i$, $(B''_i)_\fp\to B''_\fp=B_1$ is split injective as a map of $A_\fp$-algebras,
see \citestacks{00QO}, thus an isomorphism.

We have now obtained a finite and finitely presented $A$-algebra $B_0=B''_i$
such that $(B_0)_\fp=B_1$.
Since $\Spec(B_1)\to\Spec(A_\fp)$
is surjective, there exists $f\not\in \fp$ such that
$\Spec((B_0)_f)\to\Spec(A_f)$
is surjective,
see \citestacks{07RR}.
Set $B=B_0\times A/fA$.
Then by our choice, $B_\fp=B_1$,
$B$ is a finite and finitely presented $A$-algebra,
and $\Spec(B)\to \Spec(A)$ is surjective, as desired.
\end{proof}

\begin{Cor}[cf. {\cite[Lemma 2.1.3]{DTetale}}]
\label{cor:SplinterLocalize}
The followings are equivalent for an arbitrary ring $S$.

(1) $S$ is a splinter.

(2) For every prime ideal $\fp$ of $S$, $S_\fp$ is a splinter.

(3) For every maximal ideal $\fm$ of $S$, $S_\fm$ is a splinter.
\end{Cor}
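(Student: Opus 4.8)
The plan is to establish the cycle $(1)\Rightarrow(2)\Rightarrow(3)\Rightarrow(1)$. The implication $(2)\Rightarrow(3)$ is immediate, since maximal ideals are prime. For $(1)\Rightarrow(2)$, fix $\fp\in\Spec(S)$ and a finite, finitely presented ring map $S_\fp\to B_1$ inducing a surjection on spectra. By Lemma \ref{lem:FFPSLocalize} (applied with $A=S$) there is a finite, finitely presented ring map $S\to B$ inducing a surjection on spectra with $B_\fp=B_1$. As $S$ is a splinter, this map admits an $S$-linear retraction $B\to S$, and localizing at $\fp$ yields an $S_\fp$-linear retraction $B_1=B_\fp\to S_\fp$. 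Hence $S_\fp$ is a splinter. This is the only place where the substance of Lemma \ref{lem:FFPSLocalize} enters; everything else is formal.

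For $(3)\Rightarrow(1)$ I would exhibit $S$ as a pure subring of a product of splinters. Set $R=\prod_{\fm}S_\fm$, the product over the maximal ideals $\fm$ of $S$ with its canonical $S$-algebra structure; by Lemma \ref{lem:SplinterProduct}, $R$ is a splinter. Granting that $S\to R$ is pure, Lemma \ref{lem:FlatDescentSplinters} then yields that $S$ is a splinter, as desired.

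The step requiring care — and the main obstacle, precisely because $S$ is not assumed Noetherian or even coherent, so that an infinite product of flat $S$-modules need not be flat — is the purity of $S\to R$. Here I would first reduce to testing injectivity of $M\to M\otimes_S R$ on finitely presented $S$-modules $M$: every $S$-module is a filtered colimit of finitely presented ones, $-\otimes_S R$ commutes with colimits, and filtered colimits are exact. For finitely presented $M$, a presentation $S^{\oplus a}\to S^{\oplus b}\to M\to 0$, together with exactness of products and their commutation with finite direct sums, identifies $M\otimes_S R$ with $\prod_{\fm}M_\fm$ compatibly with the natural maps; so it suffices to check that $M\to\prod_{\fm}M_\fm$ is injective. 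But an element of its kernel has annihilator meeting the complement of every maximal ideal, hence has annihilator the unit ideal, hence is zero. This proves $S\to R$ pure and completes the argument.
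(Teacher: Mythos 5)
Your proof is correct, and $(1)\Rightarrow(2)\Rightarrow(3)$ matches the paper exactly (Lemma \ref{lem:FFPSLocalize}, then the trivial implication). Where you diverge is $(3)\Rightarrow(1)$. The paper's proof is one line: given a finite, finitely presented $S\to T$ with surjective spectrum map, each $S_\fm\to T_\fm$ is pure by (3), and purity of a module map can be checked at maximal ideals (\citestacks{05CL}), so $S\to T$ is pure. You instead exhibit $S$ as a pure subring of the splinter $R=\prod_\fm S_\fm$ (pure by your hand-rolled argument, splinter by Lemma \ref{lem:SplinterProduct}) and conclude by pure descent (Lemma \ref{lem:FlatDescentSplinters}). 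This is a genuinely different decomposition of the problem, and it works: your purity verification is sound, including the reduction to finitely presented $M$ (filtered colimits of injections are injections) and the identification $M\otimes_S R\cong\prod_\fm M_\fm$ for such $M$ via a presentation and exactness of products. What the paper's route buys is brevity, at the cost of citing a ready-made local-to-global criterion for universal injectivity; what yours buys is a more structural statement (pure embedding into a product of local splinters) and the fact that it only invokes lemmas already proved in the paper, at the cost of essentially re-deriving the content of \citestacks{05CL} for the diagonal map $S\to\prod_\fm S_\fm$. In fact the two purity arguments are close cousins: the kernel computation $\operatorname{Ann}(m)\not\subseteq\fm$ for all $\fm$ implies $\operatorname{Ann}(m)=S$ is exactly the engine behind the Stacks lemma the paper cites. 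Either route is a valid proof.
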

\begin{proof}
Lemma \ref{lem:FFPSLocalize} gives that (1) implies (2);
(2) trivially implies (3); and (3) implies (1) because purity can be checked locally, see \citestacks{05CL}.
\end{proof}

\begin{Lem}[cf. {\cite[Lemma 2.1.1]{DTetale}}]
\label{lem:SplinterNormal}
Let $S$ be a splinter.
Then $S$ is reduced and integrally closed in its total fraction ring.
If $S$ has finitely many minimal primes, then $S$ is a finite product of normal domains.
\end{Lem}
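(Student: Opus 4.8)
The plan is to establish the three assertions in the order stated, each reducing to a choice of a well-chosen finite map that must split. First, for \emph{reducedness}: if $x \in S$ is nilpotent, say $x^n = 0$ with $n$ minimal, consider the ring map $S \to S/(x^{n-1})$. This is finite and finitely presented and induces a surjection of spectra, since $x^{n-1}$ is nilpotent (so $V(x^{n-1}) = \Spec S$). If $S$ is a splinter this must split as a map of $S$-modules, but a surjection with nontrivial kernel splitting forces the kernel to be a direct summand — in particular $(x^{n-1})$ would have to be zero as an ideal retract, contradicting minimality of $n$ unless $x^{n-1} = 0$. So $S$ is reduced. (One can equivalently invoke the ``pure'' formulation: $S \to S/(x^{n-1})$ is pure, hence injective, forcing $x^{n-1} = 0$; injectivity of a pure map is \citestacks{058I}.)

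Next, for \emph{integral closedness in the total fraction ring}: let $Q$ be the total fraction ring of $S$ and let $t \in Q$ be integral over $S$. Then $S[t] \subseteq Q$ is a finite $S$-algebra; one checks it is finitely presented (it is a quotient of a polynomial ring by the monic relation satisfied by $t$, after clearing denominators — here the fact that $t \in Q$, so $t = a/b$ with $b$ a nonzerodivisor, is what keeps everything under control), and $\Spec S[t] \to \Spec S$ is surjective because $S \to S[t]$ becomes an isomorphism after inverting the nonzerodivisor $b$, hence is surjective on the complement of $V(b)$, and is an isomorphism (even faithfully flat) near the generic points / over $V(b)$ as well since localizing $S[t]$ and $S$ at any prime and comparing — more cleanly, $S \to S[t]$ is injective with $S[t]/S$ killed by a power of $b$, and $b$ is a nonzerodivisor, so the map is surjective on spectra by the going-up/lying-over for integral maps combined with surjectivity away from $b$. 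Then a splitting $\rho : S[t] \to S$ of the inclusion gives $t = \rho(t) \cdot 1$? — more carefully, purity of $S \hookrightarrow S[t]$ forces it to be an isomorphism (a pure finite extension that is an iso after inverting a nonzerodivisor and injective is an iso), so $t \in S$. The cleanest route is: $S \to S[t]$ is pure by the splinter hypothesis, pure maps are injective, and a pure ring map that is an isomorphism after inverting a nonzerodivisor $b$ with cokernel $b$-power-torsion must be surjective, hence $t \in S$.

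Finally, for the \emph{product decomposition} when $S$ has finitely many minimal primes: since $S$ is reduced, the minimal primes $\fp_1, \dots, \fp_r$ are pairwise comaximal after passing to the normalization, but more directly, $S$ being integrally closed in its total ring of fractions $Q = \prod_i \mathrm{Frac}(S/\fp_i)$ (a finite product of fields, as $Q$ is a reduced total fraction ring with finitely many minimal primes, which is Artinian, hence a product of fields) means $S$ contains the idempotents of $Q$: each idempotent $e_i \in Q$ is integral over $S$ (it satisfies $e_i^2 = e_i$), hence lies in $S$. These idempotents are orthogonal and sum to $1$, so $S = \prod_i S e_i$, and each factor $S e_i$ is a domain (it injects into the field $\mathrm{Frac}(S/\fp_i)$) that is integrally closed in its fraction field by Lemma \ref{lem:FlatDescentSplinters} applied to the projection $S \to Se_i$ (a quotient by an idempotent ideal is a localization, hence flat, in fact $Se_i$ is a direct factor so $S \to Se_i$ is pure) together with the first two parts applied to $Se_i$. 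Hence each $Se_i$ is a normal domain.

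The main obstacle I anticipate is the \emph{finite presentation} bookkeeping in the second step: the definition of splinter here requires the finite map to be finitely presented, so one cannot simply say ``$S[t]$ is finite over $S$'' — one must exhibit $S[t]$ as a finitely presented $S$-algebra (or, as in Lemma \ref{lem:FFPSLocalize}, write it as a filtered colimit of finite finitely presented algebras with surjective transition maps and pass to a large enough stage), and similarly verify the surjectivity of the spectrum map rather than taking it for granted. In the Noetherian case all of this is automatic, which is why the result is standard there; the non-Noetherian argument is exactly the kind of colimit manipulation already deployed in Lemma \ref{lem:FFPSLocalize}, so I would lean on that lemma's technique.
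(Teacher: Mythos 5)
Your proof is correct and follows the same three-step outline as the paper's proof, but the ``main obstacle'' you flag at the end is a red herring. Definition~\ref{def:splinter} records an equivalent characterization: a ring $S$ is a splinter if and only if every \emph{finite} ring map $S\to T$ inducing a surjection on spectra is \emph{pure}, with no finite presentation hypothesis. The paper's proof uses exactly this: for any finite $S$-subalgebra $T$ of the total fraction ring $K$, the map $S\to T$ is injective and finite, hence $\Spec T\to\Spec S$ is surjective by lying over, hence $S\to T$ is pure; then $aT\cap S=aS$ for all $a\in S$, and since every element of $T\subseteq K$ is $s/b$ with $b$ a nonzerodivisor, one gets $T=S$ immediately. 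Your argument via $S[t]$, inverting $b$, and tracking $b$-power torsion in the cokernel is a valid but longer route to the same conclusion; the colimit manipulation from Lemma~\ref{lem:FFPSLocalize} you suggest importing is simply not needed here. Similarly, your reducedness argument with the minimal $n$ such that $x^n=0$ is more work than necessary: for any nilpotent $a$, the map $S\to S/aS$ is already finite, finitely presented, and surjective on spectra, and splitting (or just purity, which implies injectivity) gives $a=0$ directly. Your explicit idempotent argument for the product decomposition is fine and is essentially the content of the Stacks reference \citestacks{030C} the paper invokes.
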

\begin{proof}
Let $a\in S$ be nilpotent.
Then $S\to S/aS$ is a finite and finitely presented ring map that induces a surjective map on spectra.
Therefore it is split injective and $a=0$.
Thus $S$ is reduced.

Let $T$ be a finite $S$-subalgebra of the total fraction ring $K$ of $S$.
Then $S\to T$ is injective and finite, so $\Spec(T)\to\Spec(S)$ is surjective.
Thus $S\to T$ is
pure (Definition \ref{def:splinter}), so $aT\cap S=aS$ for all $a\in S$.
Since $K$ is a localization of $S$ we see $S=T$.

The last statement follows from \citestacks{030C}.
\end{proof}

\begin{Lem}\label{lem:normalQ}
Let $S$ be a domain containing $\bQ$, the field of rational numbers.
Then $S$ is a splinter if and only if $S$ is normal.
\end{Lem}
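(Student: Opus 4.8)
The plan is to prove the two directions of the equivalence separately, reducing the problem to the Noetherian local case whenever possible.

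\textbf{The easy direction: normal $\Rightarrow$ splinter.} Suppose $S$ is a normal domain containing $\bQ$. By Corollary \ref{cor:SplinterLocalize}, it suffices to check that $S_\fp$ is a splinter for every prime $\fp$, so we may assume $S$ is a local normal domain containing $\bQ$. Now I would invoke the fact that such a ring is a filtered colimit of its finitely generated $\bQ$-subalgebras, but this does not immediately help since normality is not preserved under finite generation. Instead, the cleanest route is the classical \emph{trace argument}: if $S \hookrightarrow T$ is a finite extension inducing a surjection on spectra, then since $S$ is a normal domain with fraction field $K$ of characteristic zero, $K \hookrightarrow K \otimes_S T$ is a finite separable $K$-algebra (separability is automatic in characteristic zero), so the trace form is nondegenerate and the normalized trace $\frac{1}{[L:K]}\operatorname{Tr}$ (where we may harmlessly replace $T$ by a domain quotient dominating a minimal prime over which the map is still surjective) carries $T$ into $S$ because $S$ is integrally closed in $K$; this gives an $S$-linear splitting. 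One must be slightly careful to reduce to the case where $T$ is a domain, using that a splitting over one factor suffices, exactly as in the proof of Lemma \ref{lem:SplinterProduct}.

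\textbf{The hard direction: splinter $\Rightarrow$ normal.} This follows immediately from Lemma \ref{lem:SplinterNormal}: a splinter domain has a unique minimal prime, hence is a finite product of normal domains with only one factor, i.e.\ a normal domain. So in fact this direction requires no new work at all.

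\textbf{Where the difficulty lies.} The genuine content is entirely in the ``normal $\Rightarrow$ splinter'' direction, and the main obstacle is that one cannot naively pass to a Noetherian situation: a general normal domain containing $\bQ$ need not be Noetherian nor a colimit of \emph{normal} finitely generated $\bQ$-algebras, so Corollary \ref{cor:DirectSummand} combined with Lemma \ref{lem:DlimSplinters} does not directly apply. The trace argument above sidesteps this by working directly with the given finite extension and using only that characteristic zero makes all residue extensions separable and that integral closedness is exactly what is needed to land the normalized trace back in $S$. I expect the only subtlety to be the bookkeeping needed to reduce an arbitrary finite surjective extension $S \to T$ to the case of a module-finite extension of domains — handling the nilpotents and the several minimal primes of $T$ — which can be done by first killing a minimal prime of $T$ lying over $(0)$ in $S$ (such a prime exists by surjectivity on spectra and going-down, or more simply because $S$ is a domain so $(0)$ is in the image), and checking that an $S$-linear splitting of $S \to T/\fq$ composed with the quotient map $T \to T/\fq$ splits the original map.
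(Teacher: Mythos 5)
Your proof is correct and takes essentially the same approach as the paper: the ``only if'' direction is Lemma~\ref{lem:SplinterNormal} exactly as you say, and for the ``if'' direction the paper simply cites \cite[Lemmas 2 and 3]{Hoc73}, which is precisely the normalized-trace argument (reduce to a domain quotient over $(0)$, then use $\frac{1}{[L:K]}\operatorname{Tr}_{L/K}$ landing in $S$ by normality) that you spell out. The preliminary reduction to the local case via Corollary~\ref{cor:SplinterLocalize} is harmless but unnecessary, since the trace construction works directly for an arbitrary normal domain containing $\bQ$.
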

\begin{proof}
The ``only if'' part is contained in Lemma \ref{lem:SplinterNormal},
whereas the ``if'' part follows immediately from \cite[Lemmas 2 and 3]{Hoc73}.
\end{proof}

The following result is essentially due to Datta-Tucker \cite{DTetale}. 

\begin{Thm}\label{thm:SplinterEtale}
Let $S$ be a Noetherian splinter and let $S'$ be a direct limit of \'etale $S$-algebras.
Then $S'$ is a splinter.
\end{Thm}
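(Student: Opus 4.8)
The plan is to reduce to a local, finitely-presented situation and then invoke the Datta--Tucker result on étale maps of Noetherian splinters. First I would use Corollary \ref{cor:SplinterLocalize} to reduce the statement to checking that $S'_{\fq}$ is a splinter for every maximal (equivalently, prime) ideal $\fq$ of $S'$. Lying over $\fp = \fq \cap S$, the ring $S'_{\fq}$ is a localization of $S' = \operatorname{colim}_i S'_i$, where each $S'_i$ is étale over $S$; by Lemma \ref{lem:DlimSplinters} it suffices to realize $S'_{\fq}$ as a directed colimit of splinters. Concretely, write $\fq_i$ for the contraction of $\fq$ to $S'_i$; then $S'_{\fq} = \operatorname{colim}_i (S'_i)_{\fq_i}$, and each $(S'_i)_{\fq_i}$ is a local ring of an étale $S$-algebra, hence the (Henselization-free) local-étale situation: $(S'_i)_{\fq_i}$ is a localization of a standard étale extension of $S_{\fp'}$ for $\fp' = \fq_i \cap S$.

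Next I would bring in the actual Datta--Tucker input. By Corollary \ref{cor:SplinterLocalize}, $S_{\fp'}$ is a Noetherian splinter; a localization of a standard étale $S_{\fp'}$-algebra is again a localization of a finite-type, essentially-smooth-of-relative-dimension-zero $S_{\fp'}$-algebra, and the statement that such rings are splinters is exactly (the local form of) \cite[Theorem A]{DTetale} — or rather the weaker étale statement it contains. Since each $(S'_i)_{\fq_i}$ is of this form, each is a splinter, and therefore the directed colimit $S'_{\fq}$ is a splinter by Lemma \ref{lem:DlimSplinters}. Running this for every $\fq \in \operatorname{Spec}(S')$ and applying Corollary \ref{cor:SplinterLocalize} again gives that $S'$ is a splinter.

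The main obstacle I anticipate is purely bookkeeping at the colimit: making sure the transition maps $(S'_i)_{\fq_i} \to (S'_j)_{\fq_j}$ genuinely form a \emph{directed system of rings} realizing the localization $S'_{\fq}$ in the limit, since a priori $\operatorname{Spec}(S'_j) \to \operatorname{Spec}(S'_i)$ need not be injective and one must track the chosen primes coherently along the diagram. This is handled by the standard fact that filtered colimits commute with localization: $\operatorname{colim}_i (S'_i)_{\fq_i} = (\operatorname{colim}_i S'_i)_{\operatorname{colim}_i \fq_i} = S'_{\fq}$, where $\fq = \operatorname{colim}_i \fq_i$ is the prime of $S'$ we started with. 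A secondary, milder point is to confirm that the étale $S$-algebras appearing here are automatically finitely presented (so that "étale" carries its full meaning and Corollary \ref{cor:SplinterLocalize} applies without Noetherian gymnastics), which is immediate since étale morphisms are of finite presentation by definition, and each $S'_i$ is finite type over the Noetherian ring $S$.
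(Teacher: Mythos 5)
Your proof is correct and it rests on the same two key inputs the paper uses---\cite[Theorem A]{DTetale} and Lemma \ref{lem:DlimSplinters}---but it routes through an unnecessary localization layer. The paper's proof is immediate: by \cite[Theorem A]{DTetale}, each \'etale $S$-algebra $S'_i$ in the presentation $S' = \operatorname{colim}_i S'_i$ is already a Noetherian splinter, and then Lemma \ref{lem:DlimSplinters} applied to this directed system gives that $S'$ is a splinter. There is no need to invoke Corollary \ref{cor:SplinterLocalize}, to contract $\fq$ along the tower, or to worry about the commutation of localization with filtered colimits; those steps are all sound, but they reprove (locally) what Theorem A of Datta--Tucker already gives globally. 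The only subtlety you should flag is the one the paper implicitly uses: each $S'_i$ is Noetherian because it is of finite type (indeed finite presentation) over the Noetherian ring $S$, so Theorem A applies to it as stated.
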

\begin{proof}
Combine \cite[Theorem A]{DTetale} and Lemma \ref{lem:DlimSplinters}.
\end{proof}

In the rest of this section we establish a result parallel to \cite[Theorem 1.0.1]{DTopen}.
We first introduce the following definitions.

\begin{Def}[cf. {\cite[\S 4]{DTopen}}]
\label{def:LOCUS}
Let $X$ be a scheme.
The \emph{splinter locus} $\Spl(X)$ of $X$ is
the set of points $x\in X$ such that $\cO_{X,x}$ is a splinter.
For a ring $A$ we write $\Spl(A)$ for $\Spl(\Spec(A))$.
\end{Def}

Note that by Corollary \ref{cor:SplinterLocalize},
$\Spl(X)$ is stable under generalization and
a ring  $A$ is a splinter if and only if $\Spl(A)=\Spec(A)$.

\begin{Def}[cf. {\cite[Definition 3.2.1]{DTopen}}]\label{def:tandSgima}
Let $A$ be a ring and $B$ be an $A$-algebra.
The \emph{trace of $B$ over $A$} is the ideal
\[
\ft_{B/A}=\operatorname{im}(\operatorname{Hom}_A(B,A)\to A)
\]
where $\operatorname{Hom}_A(-,-)$ denotes the inner hom in the category of $A$-modules and the map is the evaluation at $1\in B$.

For a ring $A$ we let $\Sigma_A=\Sigma^{f,fp,s}_A$ be the collection of all ideals $\ft_{B/A}$ where $B$ is a finite and finitely presented $A$-algebra such that $\Spec(B)\to\Spec(A)$ is surjective.
\end{Def}

\begin{Prop}\label{prop:SplandSigma}
The followings hold for an arbitrary ring $A$.

(1) For all $\fp\in\Spec(A)$ we have $\Sigma_{A_\fp}=\{\fa A_\fp\mid \fa\in\Sigma_A\}$; and

(2) $\Spl(A)=\Spec(A)\setminus \bigcup_{\fa\in\Sigma_A}V(\fa)$.

In particular, if $\Sigma_A$ is finite, then $\Spl(A)$ is open.
\end{Prop}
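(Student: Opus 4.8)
The plan is to reduce everything to one elementary remark: for a ring $R$ and a finite $R$-algebra $C$, the structure map $R\to C$ splits as a map of $R$-modules if and only if $\ft_{C/R}=R$. Indeed, a splitting is precisely an $R$-linear map $s\colon C\to R$ with $s(1)=1$, while $\ft_{C/R}$ is the ideal generated by the values $s(1)$ for $s\in\operatorname{Hom}_R(C,R)$; since $\ft_{C/R}$ is an ideal, it contains $1$ if and only if it is the unit ideal.

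For part (1) I would prove the two inclusions separately. Take a finite and finitely presented $A$-algebra $B$ with $\Spec(B)\to\Spec(A)$ surjective and set $\fa=\ft_{B/A}$. The base change $B_\fp=B\otimes_A A_\fp$ is again finite and finitely presented over $A_\fp$ with surjective map of spectra. Because a ring map that is finite and of finite presentation is finitely presented as a module (\citestacks{0564}), the functor $\operatorname{Hom}_A(B,-)$ commutes with the localization $A\to A_\fp$; applying this to the evaluation-at-$1$ map and using that localization is exact gives $\ft_{B_\fp/A_\fp}=\fa A_\fp$. Hence $\{\fa A_\fp\mid\fa\in\Sigma_A\}\subseteq\Sigma_{A_\fp}$. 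For the reverse inclusion, given a finite and finitely presented $A_\fp$-algebra $B_1$ with surjective map of spectra, Lemma \ref{lem:FFPSLocalize} produces a finite and finitely presented $A$-algebra $B$ with surjective map of spectra and $B_\fp\cong B_1$ as $A_\fp$-algebras; the same computation then gives $\ft_{B_1/A_\fp}=\ft_{B/A}A_\fp$ with $\ft_{B/A}\in\Sigma_A$.

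For part (2) I would fix $\fp\in\Spec(A)$. By Corollary \ref{cor:SplinterLocalize}, $\fp\in\Spl(A)$ if and only if $A_\fp$ is a splinter. By Definition \ref{def:splinter} together with the opening remark, $A_\fp$ is a splinter if and only if $\ft_{C/A_\fp}=A_\fp$ for every finite and finitely presented $A_\fp$-algebra $C$ with surjective map of spectra, i.e.\ if and only if every ideal in $\Sigma_{A_\fp}$ is the unit ideal. By part (1) this means $\fa A_\fp=A_\fp$ for all $\fa\in\Sigma_A$, equivalently $\fa\not\subseteq\fp$ for all $\fa\in\Sigma_A$, equivalently $\fp\notin\bigcup_{\fa\in\Sigma_A}V(\fa)$. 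Thus $\Spl(A)=\Spec(A)\setminus\bigcup_{\fa\in\Sigma_A}V(\fa)$, and if $\Sigma_A$ is finite this exhibits $\Spl(A)$ as the complement of a finite union of closed sets, hence open.

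I expect the only real subtlety to be the behaviour of the trace ideal under localization in part (1): the inclusion $\fa A_\fp\subseteq\ft_{B_\fp/A_\fp}$ is automatic, but the reverse uses that $B$ is finitely presented \emph{as an $A$-module}, which is exactly why the definition of $\Sigma_A$ insists on ``finite and finitely presented'' rather than merely ``finite'', and it is also why Lemma \ref{lem:FFPSLocalize} is invoked to realize a given $A_\fp$-algebra as a localization of an $A$-algebra. The rest is formal bookkeeping.
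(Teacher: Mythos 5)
Your proof is correct and follows essentially the same route as the paper: part (1) via Lemma \ref{lem:FFPSLocalize} plus compatibility of trace ideals with localization, and part (2) by combining the trace criterion for splitting with Corollary \ref{cor:SplinterLocalize} and part (1). The only cosmetic difference is that you spell out the trace-localization compatibility (via finite presentation as a module and commutation of $\operatorname{Hom}$ with localization) where the paper instead cites \cite[Lemma 3.2.3]{DTopen}.
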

\begin{proof}
(1) follows from Lemma \ref{lem:FFPSLocalize} and \cite[Lemma 3.2.3]{DTopen}.
For (2), we note that a ring map $A\to B$ splits as a map of $A$-modules if and only if $\ft_{B/A}=A$.
Thus (2) follows from (1).
\end{proof}

One more preparation.

\begin{Lem}\label{lem:extendFpure}
Let $R$ be a Noetherian local $\bF_p$-algebra. Assume that $R$ is $F$-pure.
Then there exists a flat local ring map $R\to R'$ such that $R'$ is Noetherian, local, $F$-pure, and $F$-finite.
\end{Lem}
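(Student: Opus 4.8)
The plan is to reduce to the case of a complete local ring, and then to enlarge the residue field by means of the $\Gamma$-construction of Hochster and Huneke.

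First I would replace $R$ by its $\fm$-adic completion $\hat R$. The map $R\to\hat R$ is faithfully flat, $\hat R$ is again Noetherian and local, and a composition of flat local homomorphisms is again flat and local; so it is enough to find the desired $R'$ over $\hat R$, and I may assume $R$ is complete once I know $\hat R$ is still $F$-pure. For the latter I would use the criterion that a Noetherian local ring $A$ with residue field $\kappa$ is $F$-pure if and only if the natural map $E_A(\kappa)\to E_A(\kappa)\otimes_A F_*A$ is injective: since $E:=E_R(k)=E_{\hat R}(k)$ is $\fm$-power torsion and the $\fm^{[p^n]}$-adic topology agrees with the $\fm$-adic one, one has $F_*\hat R=\widehat{F_*R}$ and hence $E\otimes_R F_*R=E\otimes_{\hat R}F_*\hat R$ compatibly with the Frobenius, so $R$ is $F$-pure exactly when $\hat R$ is.

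Assuming now $R$ complete, Cohen's structure theorem lets me write $R=S/I$ with $S=K[[x_1,\dots,x_n]]$ for a coefficient field $K$. If $[K:K^p]<\infty$ then $K$ is $F$-finite, hence so are $S$ and $R$, and I would take $R'=R$. Otherwise I would fix a $p$-basis $\Lambda$ of $K$ and, for a cofinite subset $\Gamma\subseteq\Lambda$, pass to the Hochster--Huneke ring $R^\Gamma$, which is a complete Noetherian local ring, the map $R\to R^\Gamma$ being faithfully flat and local (in particular flat), whose residue field $K^\Gamma$ has the finite $p$-basis $\Lambda\setminus\Gamma$. Then $K^\Gamma$ is $F$-finite, and a complete Noetherian equicharacteristic local ring with $F$-finite residue field is $F$-finite, so $R^\Gamma$ is $F$-finite; and, for $\Gamma$ sufficiently small, $R^\Gamma$ inherits the $F$-purity of $R$. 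I would then take $R'=R^\Gamma$.

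The main obstacle is precisely this last assertion: the preservation of $F$-purity under the $\Gamma$-construction for $\Gamma$ small. This is the only input that is not formal, it is where the freedom to shrink $\Gamma$ is essential, and I would invoke it from Hochster and Huneke. A more self-contained route would use Fedder's criterion for the regular local rings $S$ and $S^\Gamma=K^\Gamma[[x_1,\dots,x_n]]$, exploiting that $R^\Gamma=S^\Gamma/IS^\Gamma$, that colon ideals commute with the flat base change $S\to S^\Gamma$, and that $\fm_S^{[p]}S^\Gamma\cap S=\fm_S^{[p]}$ by faithful flatness; but since $S$ itself need not be $F$-finite, applying Fedder's criterion to $S$ would need extra justification, which is why I would prefer to cite the $\Gamma$-construction literature. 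I would also take care to verify that $R^\Gamma$ is genuinely Noetherian, as that --- together with its $F$-finiteness --- is the technically delicate heart of the construction.
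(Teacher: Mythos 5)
Your argument is correct, and structurally it parallels the paper's: both write $R^\wedge\cong K[[x_1,\dots,x_n]]/I$ by Cohen's structure theorem and build $R'$ by enlarging the coefficient field to an $F$-finite field $K'$, setting $R'=K'[[x_1,\dots,x_n]]/IK'[[x_1,\dots,x_n]]$. Where you diverge is in the choice of $K'$ and, more importantly, in the key lemma that transfers $F$-purity. The paper simply takes $K'=\bar k$ (any $F$-finite extension of $k$ would do) and invokes Hashimoto \cite[Lemma 3.26]{Has10}, a flat-base-change criterion for $F$-purity: since $R\to R'$ is flat local with closed fibre the field $k'$, $F$-purity of $R$ passes directly to $R'$. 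In particular the paper never needs to know that $\hat R$ is $F$-pure as a separate step. You instead take $K'=K^\Gamma$ via the Hochster--Huneke $\Gamma$-construction and appeal to the HH theorem that $R^\Gamma$ inherits $F$-purity for $\Gamma$ sufficiently small; this forces you first to pass to $\hat R$ and verify $F$-purity there, which you do correctly via Matlis duality and the identification $E_R(k)=E_{\hat R}(k)$. Both routes are valid. The paper's is leaner: Hashimoto's lemma applies in one stroke and sidesteps the nontrivial Noetherianity and preservation-for-small-$\Gamma$ parts of the $\Gamma$-construction as well as the completion step; yours buys the mild extra feature that the residue-field extension is purely inseparable, which this lemma does not need. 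Your instinct to be cautious about Fedder's criterion without $F$-finiteness, and to lean on the $\Gamma$-construction literature rather than reprove it, is sound.
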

\begin{proof}
Let $\fm$ be the maximal ideal of $R$, $k=R/\fm$, and let $k'$ be a field extension of $k$ that is $F$-finite (say the algebraic closure of $k$).
Let $R^\wedge$ be the completion of $R$, so $R^\wedge\cong k[[X_1,\ldots,X_n]]/I$ by Cohen structure theorem.
Let $R'= k'[[X_1,\ldots,X_n]]/Ik'[[X_1,\ldots,X_n]]$, a complete Noetherian local ring.
Then it is clear that $R\to R'$ is flat and local and that $R'$ is $F$-finite.
Finally, $R'/\fm R'=k'$, so $R'$ is $F$-pure by \cite[Lemma 3.26]{Has10} (take $c=1$).
\end{proof}

Here is our promised result.

\begin{Thm}\label{thm:OPENsmoverFpure}
Let $R$ be a Noetherian local $\bF_p$-algebra. Assume that $R$ is $F$-pure.
Then for every smooth $R$-scheme $X$,
$\Spl(X)$ is open.
\end{Thm}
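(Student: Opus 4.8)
The plan is to reduce, by a sequence of base changes, to the $F$-finite case and then invoke the main result of \cite{DTopen}. First I would reduce to the case where $X = \Spec(A)$ is affine, since openness is local on $X$ and a smooth morphism is locally of finite presentation; so from now on $A$ is a smooth (in particular finitely presented) $R$-algebra. The key point will be to enlarge $R$ so as to gain $F$-finiteness without losing $F$-purity or flatness, and then to transport the openness of the splinter locus downstairs along the resulting faithfully flat map.

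The central mechanism is faithful flat descent of the splinter property together with a good behavior of $\Sigma$ under base change. Using Lemma \ref{lem:extendFpure}, choose a flat local map $R \to R'$ with $R'$ Noetherian, local, $F$-pure, and $F$-finite. Set $A' = A \otimes_R R'$, a smooth $R'$-algebra. By \cite[Theorem 1.0.1]{DTopen} (applied with the $F$-finite $F$-pure base $R'$), $\Spl(A')$ is open; equivalently, by Proposition \ref{prop:SplandSigma}(2), $\Sigma_{A'}$ cuts out a closed set with finitely many components, which amounts to saying $A'$ has a nonzero ``splinter-defining'' ideal whose radical is reached by finitely many traces. The hard part will be to descend this finiteness statement to $A$: I want to show $\Spl(A)$ is the preimage of $\Spl(A')$ under $\Spec(A') \to \Spec(A)$ minus nothing, i.e.\ that $x \in \Spl(A)$ if and only if every (equivalently, some) point of $\Spec(A')$ above $x$ lies in $\Spl(A')$. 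One direction is Lemma \ref{lem:FlatDescentSplinters}: if a localization of $A'$ is a splinter, the corresponding localization of $A$ is too, since $A_\fp \to A'_{\fp'}$ is faithfully flat for a suitable prime $\fp'$ over $\fp$. For the converse I would argue that if $A_\fp$ is a splinter then so is its faithfully flat, \emph{regular} extension obtained from $R \to R'$ after localizing---but proving ascent along such maps is precisely Theorem \ref{thm:ScalarFoverR} (or its consequences), which is not available here since we are still inside Section \ref{sec:splinters}. So instead I would avoid the converse entirely and use a direct argument with the $\Sigma$-ideals: by Proposition \ref{prop:SplandSigma}(1) the formation of $\Sigma$ commutes with localization, and one checks that $\ft_{B/A} \cdot A' = \ft_{B\otimes_A A'/A'}$ when $R \to R'$ is flat (this is a standard base-change-of-Hom computation, valid because $B$ is finite and finitely presented over $A$, hence $A$-module-finitely-presented, and $R'$ is $A$-flat). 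Consequently the closed set $\bigcup_{\fa \in \Sigma_A} V(\fa)$ in $\Spec(A)$ pulls back to $\bigcup_{\fa' \in \Sigma_{A'}} V(\fa')$ in $\Spec(A')$; since the latter is closed with open complement (by the $F$-finite case), and $\Spec(A') \to \Spec(A)$ is faithfully flat hence submersive, the set $\Spl(A) = \Spec(A) \setminus \bigcup_{\fa \in \Sigma_A} V(\fa)$ is open.

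The main obstacle, then, is the base-change identity $\ft_{B/A} \otimes_A A' = \ft_{B \otimes_A A'/A'}$ and the matching statement $\{\fa A' \mid \fa \in \Sigma_A\} \subseteq \Sigma_{A'}$ together with enough of a reverse inclusion (on the level of the radical of the union of the $V(\fa)$'s) to conclude. Concretely, the surjectivity-of-spectra condition on $B$ must be preserved and reflected along the faithfully flat $A \to A'$, which it is; and one must know that every $B'$ witnessing a trace ideal in $\Sigma_{A'}$ can be ``approximated'' by one defined over $A$ up to radicals---here I would use that $A \to A'$ is a filtered colimit-free flat map but, more robustly, simply note we only need the closed sets to correspond, not the ideals, and the closed set $\bigcup_{\fa'} V(\fa')$ in $\Spec(A')$ is the non-splinter locus, which by Lemma \ref{lem:FlatDescentSplinters} contains the preimage of the non-splinter locus of $A$, while the reverse containment is exactly what the $F$-finite openness theorem plus the base-change identity for the specific $\fa \in \Sigma_A$ gives after descent. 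Packaging this carefully---descending openness of a constructible-looking set along a faithfully flat (not necessarily finite-type) map---is the delicate bookkeeping step; everything else is formal.
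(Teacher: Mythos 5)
Your reduction to the affine case and your use of Lemma \ref{lem:extendFpure} to pass to the flat local extension $R \to R'$ with $R'$ $F$-finite and $F$-pure, and then to $A' = A \otimes_R R'$, are exactly what the paper does. The base-change identity $\ft_{B/A} A' = \ft_{B\otimes_A A'/A'}$ for $B$ finite and finitely presented, together with the preservation of surjectivity on spectra, is also correct and is a key ingredient. But your final step---``descending openness of a constructible-looking set along a faithfully flat map''---has a genuine gap that you flag but do not close. To descend closedness of the non-splinter locus from $\Spec(A')$ to $\Spec(A)$ via submersiveness, you would need to know that the preimage $\bigcup_{\fa \in \Sigma_A} V(\fa A')$ is itself closed in $\Spec(A')$. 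You only know this set is \emph{contained} in the closed set $\Spec(A')\setminus\Spl(A')$ (by your inclusion $\{\fa A' \mid \fa \in \Sigma_A\}\subseteq\Sigma_{A'}$); the reverse containment would be an ascent statement you correctly note is not available at this point, and containment in a closed set does not make an a priori infinite union of $V(\fa A')$'s closed. Without finiteness of $\Sigma_A$ the submersiveness argument does not get off the ground.

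The fix, which is what the paper actually does, is to avoid descending openness entirely and instead prove directly that $\Sigma_A$ is finite. You have all the ingredients: your base-change identity gives a well-defined map $\Sigma_A \to \Sigma_{A'}$, $\fa \mapsto \fa A'$; faithful flatness of $A \to A'$ makes this map injective; and $\Sigma_{A'}$ is finite because each trace ideal in $\Sigma_{A'}$ is uniformly $F$-compatible (\cite[Lemma 3.2.3]{DTopen}) and the $F$-finite $F$-split ring $A'$ has only finitely many uniformly $F$-compatible ideals (\cite[Proposition 3.4.1]{DTopen}). Hence $\Sigma_A$ is finite and $\Spl(A)$ is open by Proposition \ref{prop:SplandSigma}(2). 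Note in particular that the paper does not invoke the openness theorem \cite[Theorem 1.0.1]{DTopen} as a black box for $A'$, but rather its two constituent lemmas, precisely so that the finiteness of $\Sigma_{A'}$ (rather than merely openness of $\Spl(A')$) is visible and can be transported downstairs.
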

\begin{proof}
We may assume $X=\Spec(A)$ affine.
Let $R\to R'$ be as in Lemma \ref{lem:extendFpure} and let $A'=A\otimes_R R'$.
Then $A'$ is a smooth $R'$-algebra, thus Noetherian, $F$-finite, and $F$-pure (see  \cite[Proposition 2.4]{Has10}), hence $F$-split.
By \cite[Proposition 3.4.1]{DTopen}, we see that $A'$ has finitely many uniformly $F$-compatible ideals, see \cite[Definition 3.1.1]{DTopen}.

For each $\fa\in\Sigma_A$ (Definition \ref{def:tandSgima}),
$\fa A'$ is a uniformly $F$-compatible ideal by \cite[Lemma 3.2.3]{DTopen}.
Since $A\to A'$ is faithfully flat, we see $\Sigma_A$ is finite, thus $\Spl(A)$ is open by Proposition \ref{prop:SplandSigma}.
\end{proof}

\noindent\emph{Remark}. We can weaken the assumption to that $X$ is covered by affine opens $\Spec(A)$ such that $R\to A$ is $F$-pure, see \cite[(2.1) and Proposition 2.4]{Has10}.

\section{Ultraproduct of local rings}\label{sec:Ultraproduct}

The notion of ultraproduct is ubiquitous in model theory.
For a general definition and treatment specializing to commutative algebra, see
\cite{BOOK}.
Here we give an equivalent and convenient definition for local rings.

\begin{Def}
Let $X$ be an index set and $A_x\ (x\in X)$ be a family of local rings.
An \emph{ultraproduct} of the rings $A_x\ (x\in X)$ is the localization of the ring $\prod_x A_x$ at a maximal ideal.

An \emph{ultrapower} of a local ring $A$ is an ultraproduct of a family $A_x\ (x\in X)$ where each $A_x=A$.
\end{Def}

\noindent\emph{Remark}.
The relation between our definition and the usual definition (cf. \cite[Chapter 2]{BOOK}) is as follows.
For a subset $U$ of $X$, we have an idempotent $e_U\in \prod_x A_x$ defined by $(e_U)_x=1$ when $x\in U$ and $(e_U)_x=0$ when $x\not\in U$.
Given a maximal ideal $\fM$ of $\prod_x A_x$, we get a family of subsets $\cU:=\{U\subseteq X\mid 1-e_U\in\fM\}$ of $X$.
Then $\cU$ is a (possibly principal) ultrafilter on $X$,
and the ultraproduct of the rings $A_x$ with respect to $\cU$
is the localization $(\prod_x A_x)_\fM$.
(This idea appears in the proof of Lemma \ref{lem:RegIntoUltrapower} below.)
Therefore our ultraproducts are ultraproducts in the usual sense.
Conversely, every ultrafilter $\cU$ comes from a maximal ideal $\fM$.
We leave the proof to the interested reader;
the fact that $A_x$ are local is essential.\\

Immediately from Lemma \ref{lem:SplinterProduct} and Corollary \ref{cor:SplinterLocalize}, we have

\begin{Lem}\label{lem:SplinterUltraprod}
An ultraproduct of local splinters is a splinter.
In particular, an ultrapower of a local splinter is a splinter.
\end{Lem}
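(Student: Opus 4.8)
The plan is simply to unwind the definition of an ultraproduct and then stack the two results already proved. By the definition given just above, an ultraproduct of the family of local rings $A_x$ $(x\in X)$ is a localization $\left(\prod_x A_x\right)_\fM$ of the product ring $\prod_x A_x$ at some maximal ideal $\fM$. So it suffices to show this localization is a splinter.

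First I would invoke Lemma \ref{lem:SplinterProduct}: since each $A_x$ is a splinter by hypothesis, the direct product $S:=\prod_x A_x$ is a splinter. Then I would apply Corollary \ref{cor:SplinterLocalize}, specifically the implication $(1)\Rightarrow(2)$, which says that for any splinter $S$ and any prime $\fp\in\Spec(S)$ the localization $S_\fp$ is again a splinter. Taking $\fp=\fM$ shows that the ultraproduct $\left(\prod_x A_x\right)_\fM = S_\fM$ is a splinter. For the ``in particular'' clause, an ultrapower of a local splinter $A$ is by definition the special case of the above with every $A_x=A$, so nothing further is needed.

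There is essentially no obstacle here: all the real work was done in Lemma \ref{lem:SplinterProduct} (where one uses that a finite, finitely presented algebra over a product ring base-changes compatibly along the projections, so that a splitting can be assembled componentwise) and in Corollary \ref{cor:SplinterLocalize} (where one uses Lemma \ref{lem:FFPSLocalize} to spread out a finite surjective algebra over a localization, and the fact that purity is local). The present lemma is just the observation that an ultraproduct of local rings is, by our ad-hoc definition, a localization of a product at a maximal ideal, so these two facts combine to give the claim directly.
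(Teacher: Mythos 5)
Your proof is correct and is exactly the paper's argument: the paper states the lemma as following ``immediately from Lemma \ref{lem:SplinterProduct} and Corollary \ref{cor:SplinterLocalize},'' which is precisely the two-step composition (product of splinters is a splinter, then localize at the chosen maximal ideal) that you spell out.
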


For the next result, recall the following structure theorem of regular maps.
See \citestacks{07GC}.

\begin{Thm}[Popescu]\label{thm:Popescu}
A regular homomorphism of Noetherian rings $R\to A$ is a direct limit of smooth homomorphisms $R\to A_i$.
\end{Thm}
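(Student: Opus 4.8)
The statement is the N\'eron--Popescu desingularization theorem, so the plan is to invoke it from the literature rather than reprove it; the cleanest self-contained account is \citestacks{07GC}, which follows Popescu's original papers and Swan's exposition. For orientation I sketch the shape of that argument. A standard filtered-colimit manipulation, using that $A$ is the filtered colimit of its finitely generated $R$-subalgebras, reduces the theorem to the following factorization statement: for every finitely presented $R$-algebra $P$ and every $R$-algebra map $P\to A$, there exist a smooth $R$-algebra $C$ and a factorization $P\to C\to A$ over $R$. (One then applies this to the $A_i$ and reindexes, noting that the resulting category of smooth $R$-algebras mapping to $A$ is filtered.)

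To prove the factorization one runs a ``resolution of the non-smooth locus''. Given a presentation $P=R[x_1,\dots,x_n]/I$, one attaches to it Popescu's smoothing ideal $H\subseteq P$, assembled from Jacobian minors of finite subsets of generators of $I$ together with the associated relation ideals; the defining feature of $H$ is that $R\to P$ is smooth at every prime of $P$ not containing $H$. If the image of $H$ in $A$ is the unit ideal, one localizes and finishes quickly. Otherwise one enlarges $P$ --- adjoining finitely many new variables and relations, chosen using the given map $P\to A$ --- in such a way that the extension of $H$ to $A$ strictly grows, and one iterates until it becomes the unit ideal. The legitimacy of each enlargement rests on the hypothesis that the fibers of $R\to A$ are geometrically regular: over a field, geometric regularity is precisely a Jacobian/smoothness condition, and that is exactly what produces the smooth coordinates needed at each step, even when residue fields are imperfect.

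The real difficulty is making that last point rigorous: the bookkeeping of the intermediate non-smooth loci, and, above all, the characteristic-$p$ input, where ``geometrically regular'' is strictly stronger than ``regular'' and one must argue with $p$-bases and keep control of inseparability. That is the content of Popescu's desingularization lemmas, which I would not reproduce. Since only the statement of the theorem is used in the sequel, I would content myself with the citation \citestacks{07GC}.
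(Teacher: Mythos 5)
Your proposal does exactly what the paper does: it cites \citestacks{07GC} for the N\'eron--Popescu desingularization theorem rather than reproving it, which is the right call since the paper only uses the statement as a black box. The accompanying sketch of the ``filtered colimit reduction plus smoothing ideal'' argument is accurate but not present in the paper, which gives no proof at all.
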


Note that an ultrapower of a local ring $A$ is canonically an $A$-algebra via the diagonal map.

\begin{Lem}[cf. {\cite[Theorem 7.1.1]{BOOK}}]
\label{lem:RegIntoUltrapower}
Let $R\to A$ be a regular homomorphism of Noetherian rings.
Assume that $R$ is local and strictly Henselian. 
Then there exists an ultrapower of $R$ that admits an $R$-algebra map from $A$.
\end{Lem}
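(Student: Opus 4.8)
The plan is to reduce to the smooth case by Popescu's theorem (Theorem \ref{thm:Popescu}), to produce $R$-algebra retractions of the smooth pieces using that $R$ is strictly Henselian, and then to exploit the index poset of the Popescu presentation to choose an ultrafilter merging these (\emph{a priori} incompatible) retractions into a single $R$-algebra map from $A$ to an ultrapower of $R$. First I would write $A=\operatorname{colim}_{i\in I}A_i$ as a filtered colimit of smooth $R$-algebras $A_i$, with $I$ a directed poset and transition maps $\iota_{ij}:A_i\to A_j$ for $i\le j$. Observe that any ultrapower of $R$ is a local $R$-algebra whose structure map is \emph{local} (the diagonal of a non-unit of $R$ lies in the Jacobson radical of $\prod_x R$), so $A$ can admit an $R$-algebra map to an ultrapower of $R$ only if $\fm A\neq A$; I grant this, since it holds automatically when $R\to A$ is faithfully flat, the case relevant to the applications. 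Then $A/\fm A=\operatorname{colim}_i A_i/\fm A_i$ is nonzero, so $A_i/\fm A_i\neq 0$ for all $i$ in a cofinal sub-poset of $I$, and I replace $I$ by that sub-poset.

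Next, for each $i\in I$ I would produce an $R$-algebra retraction $\sigma_i:A_i\to R$. Since $A_i/\fm A_i\neq 0$, the closed fibre of the smooth morphism $\Spec(A_i)\to\Spec(R)$ is a nonempty smooth scheme of finite type over the separably closed field $k:=R/\fm$, hence has a $k$-rational point; because $R$ is Henselian local, this $k$-rational point of the closed fibre lifts to an $R$-valued point, which gives an $R$-algebra retraction $\sigma_i:A_i\to R$. (Equivalently: over a strictly Henselian local ring, every smooth algebra with nonempty closed fibre admits a section.) In general $\sigma_j\circ\iota_{ij}\neq\sigma_i$ for $i\le j$, and correcting this failure of compatibility is precisely the role of the ultraproduct.

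Finally I would fix an ultrafilter $\cU$ on $I$ that contains every upper set $I_{\ge i}=\{j\in I\mid j\ge i\}$; such $\cU$ exists because $I$ is directed, so the sets $I_{\ge i}$ have the finite intersection property. Let $R^*=(\prod_{x\in I}R)_\fM$ be the ultrapower corresponding to $\cU$, so that for $U\subseteq I$ one has $U\in\cU$ if and only if the idempotent $e_U$ lies outside $\fM$. For each $i$ define an $R$-algebra map $\Psi_i:A_i\to\prod_{x\in I}R$ by $\Psi_i(a)_x=\sigma_x(\iota_{ix}(a))$ when $x\ge i$ and $\Psi_i(a)_x=\sigma_i(a)$ otherwise, and let $\psi_i:A_i\to R^*$ be the composite of $\Psi_i$ with the localization $\prod_{x\in I}R\to R^*$. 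For $i\le j$ and $a\in A_i$ the tuples $\Psi_j(\iota_{ij}(a))$ and $\Psi_i(a)$ agree in every coordinate $x\in I_{\ge j}$ (because $\iota_{jx}\circ\iota_{ij}=\iota_{ix}$), so their difference is annihilated by the idempotent $e_{I_{\ge j}}\notin\fM$ and therefore vanishes in $R^*$; hence $\psi_j\circ\iota_{ij}=\psi_i$, and the maps $\psi_i$ glue to an $R$-algebra map $\psi:A=\operatorname{colim}_i A_i\to R^*$, as desired. The one genuinely substantive step is the construction of the $\sigma_i$: this is where strict Henselianness is used twice---through separable closedness of the residue field to obtain $k$-rational points on the smooth closed fibres, and through the Henselian property to lift those points to $R$---while the reduction to the case $A_i/\fm A_i\neq 0$, the choice of $\cU$, and the gluing are routine bookkeeping with the directed system.
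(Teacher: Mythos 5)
Your proof is correct and takes essentially the same route as the paper: Popescu's theorem to reduce to smooth pieces, sections via strict Henselianness, and a choice of ultrafilter (equivalently, of a maximal ideal of the product containing the relevant idempotents) dominating the directed structure so as to glue the sections into an ultrapower. The only cosmetic difference is the index set and the assembly: the paper indexes the ultrapower by the poset $X$ of finitely generated $R$-subalgebras of $A$ and builds a single set-theoretic map $A \to R^X$ which becomes a ring map only after localizing at $\fM$, whereas you index by the Popescu poset $I$ itself and build honest ring maps $\psi_i \colon A_i \to (\prod_I R)_\fM$ that are compatible in the localization, then glue via the universal property of the colimit. Both work equally well.

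You have also put your finger on a genuine tacit hypothesis. As stated, the lemma needs $\fm A \neq A$: if $\fm A = A$ the conclusion fails (e.g.\ $A = R_f$ for $0 \neq f \in \fm$, since the image of $f$ in any ultrapower lies in its maximal ideal, while $f$ is a unit in $A$), and the paper's proof also implicitly requires it, since a section $\psi_i \colon A_i \to R$ exists only when $A_i/\fm A_i \neq 0$. One small simplification to your write-up: once you grant $\fm A \neq A$, you get $\fm A_i \neq A_i$ for \emph{every} $i$ automatically, because $1 \in \fm A_i$ would push forward along $A_i \to A$ to $1 \in \fm A$; so the passage to a cofinal sub-poset is unnecessary. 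In the paper's application $\fm' = \fm S' \neq S'$, so the missing hypothesis is harmless there, but it is a fair catch.
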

\begin{proof}
Write $A=\mathrm{colim}_{i} A_i$ where each $A_i$ is smooth over $R$, possible by Popescu's Theorem \ref{thm:Popescu}.
For each $i,$ fix a section $\psi_i:A_i\to R$, possible as $R$ strictly Henselian (cf.
\citestacks{055U}).

Let $X$ be the family of all finitely generated $R$-subalgebras of $A$.
Each $x\in X$
is finitely presented over $R$ since $R$ is Noetherian, so the inclusion $x\subseteq A$ factors through some $A_i$, see \citestacks{00QO}.
Composing with $\psi_i,$
we get a section $\varphi^\circ_x:x\to R$.
Now, let $\varphi_x:A\to R$ be the map that is identically $0$ outside $x$ and $\varphi^\circ_x$ on $x$.
This is of course not a ring map in general, but we do get a map of sets
$\varphi: A\to R^X$ sending $a\in A$ to $(\varphi_x(a))_{x\in X}$.
We will find a maximal ideal $\fM$ of $R^X$ such that the composition $A\to R^X\to (R^X)_\fM$ is a map of rings.
Note that by construction, $\varphi$ restricts to the diagonal map on $R$, so this ring map will be an $R$-algebra map and the proof will be finished.

Now, for each $x\in X$,
let $e(x)$ be the idempotent of $R^X$ given by $e(x)_y=0$ when $x\subseteq y$ and $1$ otherwise.
For finitely many $x_1,\ldots,x_n\in X$ and an element $r$ in the ideal of $R^X$ generated by $e(x_1),\ldots,e(x_n)$, we always have $r_y=0$ if $y\in X$ contains all $x_1,\ldots,x_n$.
Such a $y$ always exists -- for example, it can be the $R$-subalgebra of $A$ generated by $x_1,\ldots,x_n$.
Therefore the elements $e(x)\ (x\in X)$ do not generate the unit ideal, so there exists a maximal ideal $\fM$ that contains all $e(x)\ (x\in X)$.
In particular $1-e(x)\not\in\fM$ for all $x\in X$.

We now show that the composition $A\to R^X\to (R^X)_\fM$ is a map of rings, that is, the elements
$\mu:=\varphi(a+b)-\varphi(a)-\varphi(b)$ and $\nu:=\varphi(ab)-\varphi(a)\varphi(b)$ are mapped to zero in $(R^X)_\fM$ for all $a,b\in A$. 
To this end, let $x_0$ be the $R$-subalgebra of $A$ generated by $a$ and $b$.
If $y\in X,x_0\subseteq y$, then $\varphi_y(c)=\varphi^\circ_y(c)$ for all $c\in x_0$.
Since $\varphi^\circ_y$ is a ring map, $\varphi_y(a+b)=\varphi_y(a)+\varphi_y(b)$ and $\varphi_y(ab)=\varphi_y(a)\varphi_y(b)$, so $\mu_y=\nu_y=0$.
Therefore $(1-e(x_0))\mu=(1-e(x_0))\nu=0$, as desired.
\end{proof}

\begin{Lem}
\label{lem:ultrapowerFlat}
Let $(R,\fm)\to (R',\fm')$ be a flat homomorphism of Noetherian local rings with $\fm'=\fm R'$.
Then any $R$-algebra map from $R'$ to an ultrapower $R_\natural$ of $R$ is pure.
\end{Lem}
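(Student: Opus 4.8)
The plan is to establish the stronger statement that $\theta\colon R'\to R_\natural$ is faithfully flat; since faithfully flat ring maps are pure, this will suffice. Write $\iota\colon R\to R_\natural$ for the diagonal map, so that ``$\theta$ is an $R$-algebra map'' means precisely that the composite $R\to R'\xrightarrow{\theta}R_\natural$ equals $\iota$.

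The first step is to recall that $\iota$ is faithfully flat and local. Writing $R_\natural=(\prod_{x\in X}R)_\fM$, the $R$-module $\prod_{x\in X}R$ is flat because $R$ is Noetherian, hence coherent, and an arbitrary product of flat modules over a coherent ring is flat; as a localization of a flat $R$-module, $R_\natural$ is therefore flat over $R$. Identifying $R_\natural$ with the ordinary ultrapower $(\prod_xR)/\fN$ as in the Remark following the definition of an ultraproduct, its maximal ideal $\fm_\natural=\{[(a_x)]:\{x:a_x\in\fm\}\in\cU\}$ contains $\iota(\fm)$, so $\iota$ is a local homomorphism; a flat local homomorphism of local rings is automatically faithfully flat. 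In particular $\fm R_\natural\subseteq\fm_\natural$.

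The main step is to promote this to flatness of $\theta$ by means of the fibrewise criterion of flatness (see \citestacks{00R4}) applied to the composite $R\to R'\xrightarrow{\theta}R_\natural$. First note that $\theta$ is local: since $\theta$ is an $R$-algebra map and $\fm'=\fm R'$, we have $\theta(\fm')=\theta(\fm R')\subseteq\fm R_\natural\subseteq\fm_\natural$, and this is the only place where the hypothesis $\fm'=\fm R'$ enters. Consequently the prime of $R'$ lying under $\fm_\natural$ is $\fm'$ and the prime of $R$ lying under $\fm_\natural$ is $\fm$. Now $R_\natural$, which equals its own localization at $\fm_\natural$, is flat over $R$ by the first step; $R'$ is flat over $R$ by hypothesis; and $R_\natural/\fm R_\natural$ is flat over $R'/\fm R'$ for the trivial reason that $R'/\fm R'=R'/\fm'=k'$ is a field. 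The fibrewise criterion then gives that $R_\natural$ is flat over $R'$, i.e.\ $\theta$ is flat; being also local, $\theta$ is faithfully flat, hence pure, as claimed.

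The one point I expect to need care is the invocation of the fibrewise flatness criterion: the target $R_\natural$ carries no finiteness over $R'$ (it is not Noetherian, nor finitely generated as an $R'$-algebra), so one must use the form of the criterion that imposes no finiteness hypothesis on the target, with the fibre condition read simply as flatness of the closed-fibre map $R'/\fm R'\to R_\natural/\fm R_\natural$. It is exactly the assumption $\fm'=\fm R'$ that renders this fibre condition automatic, by forcing $R'/\fm R'$ to be a field.
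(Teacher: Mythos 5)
There is a genuine gap at the step you yourself flag as the delicate one. The fibrewise flatness criterion in the Stacks Project (Tag \Tag{00R4} and its neighbors) requires that $R \to S$ and $R \to S'$ be essentially of finite presentation and that the module be of finite presentation; the Noetherian-local versions instead require the module to be finite over the top ring. Neither form applies to $R' \to R_\natural$: the ultrapower $R_\natural$ is not Noetherian, not essentially of finite type over $R'$, and not a finite $R'$-module. There is no ``form of the criterion that imposes no finiteness hypothesis on the target''; the finiteness is what makes the local criterion of flatness go through (via Artin--Rees / the associated graded argument), and dropping it is not a cosmetic weakening. In fact $R_\natural$ is not even $\fm$-adically separated in general (e.g.\ the class of $(x^n)_n$ in an ultrapower of $k[[x]]$ over $\bN$), which is precisely what blocks a naive local-criterion argument.

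The paper's proof sidesteps exactly this issue. It computes $\operatorname{Tor}_1^{R'}(R_\natural, R'/\fm') = \operatorname{Tor}_1^R(R_\natural, R/\fm) = 0$ by flat base change along $R \to R'$ (as you implicitly do), but then applies a version of the local criterion that concludes flatness only for the $\fm'$-adic \emph{completion} $C$ of $R_\natural$ over $R'$ (Tag \Tag{0AGW}): completion repairs the lack of finiteness and separatedness. Since $\fM$ lies over $\fm$, the ring $C$ is nonzero and hence faithfully flat over $R'$, thus pure; and purity of the composite $R' \to R_\natural \to C$ forces purity of $R' \to R_\natural$. So the conclusion of the lemma is purity, not faithful flatness, of $R' \to R_\natural$, and the completion step is essential. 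Your overall skeleton (flatness of $R^X$ over the coherent ring $R$, locality of the various maps, exploiting $\fm' = \fm R'$ to make the closed fibre a field) matches the paper, but the jump from the Tor vanishing to flatness of $R_\natural$ itself is not available.
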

\begin{proof}
Write $R_\natural=(R^X)_{\fM}$.
We know from \citetwostacks{05CZ}{05CY} that $R^X$ is flat over $R$.
We conclude that $R_\natural$ is flat over $R$.
Since $R'$ is also flat over $R$,
we have
\[
\operatorname{Tor}_1^{R'}(R_\natural,{R'}/\fm {R'})=\operatorname{Tor}_1^R(R_\natural,R/\fm )=0.
\]
Since $R'/\fm R'=R'/\fm'$ is a field, the $\fm R'$-adic completion $C$ of the $R'$-algebra $R_\natural$ is flat over $R'$ by
\citestacks{0AGW}.
Now note that $\fM$ lies above $\fm$ by \citestacks{0AME}.
Therefore $C/\fm C=R_\natural/\fm R_\natural$ (cf. \citestacks{05GG}) is nonzero and $C$ is faithfully flat over $R'$.
Therefore $R'\to C$ is pure and so is $R'\to R_\natural$.
\end{proof}

\section{Proof of main results}\label{sec:proofs}

First, a remark on regular ring maps.

\begin{Lem}\label{lem:DecRegTosh}
Let $A$ be a ring, $B$ a direct limit of \'etale $A$-algebras, $C$ a $B$-algebra.
Assume that $B$ and $C$ are Noetherian and that $A\to C$ is regular \citestacks{07BZ}.
Then $B\to C$ is regular.
\end{Lem}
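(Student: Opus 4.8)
The statement asserts that regularity of $A\to C$ descends to regularity of $B\to C$ when $B$ is a filtered colimit of étale $A$-algebras sitting between $A$ and $C$, with $B$ and $C$ Noetherian. Since regularity of a map of Noetherian rings means flatness together with geometrically regular fibers, the plan is to check these two conditions separately for $B\to C$.

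\emph{Flatness.} First I would argue that $B\to C$ is flat. We know $A\to C$ is flat, and $A\to B$ is a colimit of étale, hence flat, maps, so $A\to B$ is flat. The subtlety is that flatness of a composite $A\to B\to C$ together with flatness of $A\to B$ does not in general give flatness of $B\to C$; however, since $B$ is a \emph{filtered colimit of étale $A$-algebras} and each $A\to B_i$ is étale (in particular $B_i\to C$ is obtained by base change issues), one localizes at a prime $\fq$ of $C$ lying over $\fp$ of $B$ and over $\fp\cap A$ of $A$. The localization $B_\fp$ is a filtered colimit of localizations of étale $A$-algebras, which are precisely the local rings that are (colimits of) localizations of étale extensions; the key point is that $\operatorname{Spec}(B)\to\operatorname{Spec}(A)$ is \emph{formally étale} locally, so $B_\fp$ is a filtered colimit of étale local $A_{\fp\cap A}$-algebras. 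Then $C_\fq$ is flat over $A_{\fp\cap A}$, and because $A_{\fp\cap A}\to B_\fp$ is a filtered colimit of local-étale maps (which are flat with unramified, i.e. ``thin,'' fibers), one deduces $B_\fp\to C_\fq$ is flat — e.g. by the local criterion, using that $\mathfrak{m}_{A}C_\fq$ and $\mathfrak{m}_B C_\fq$ generate the same ideal up to the étale base change. Concretely, I would invoke \citestacks{07BZ} and the permanence properties of flatness under filtered colimits of étale maps; alternatively, since étale maps are flat and the relevant fiber rings agree (étale maps induce isomorphisms on completed local rings up to separable residue extension), this reduces to flatness of $A\to C$ which is given.

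\emph{Geometrically regular fibers.} Next I would check the fibers. Fix $\fp\in\operatorname{Spec}(B)$ with image $\fp_0=\fp\cap A\in\operatorname{Spec}(A)$. The fiber ring of $B\to C$ over $\fp$ is $C\otimes_B\kappa(\fp)$, while the fiber of $A\to C$ over $\fp_0$ is $C\otimes_A\kappa(\fp_0)$. Since $A\to B$ is a filtered colimit of étale maps, the extension $\kappa(\fp_0)\to\kappa(\fp)$ is separable (étale maps have separable residue field extensions, and this passes to filtered colimits), and moreover $B\otimes_A\kappa(\fp_0)$ is a filtered colimit of étale $\kappa(\fp_0)$-algebras, so it is (geometrically) regular and in fact ind-étale over $\kappa(\fp_0)$. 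Therefore $C\otimes_B\kappa(\fp)$ is a localization of $C\otimes_A\kappa(\fp_0)\otimes_{\kappa(\fp_0)}\big(B\otimes_A\kappa(\fp_0)\big)$-type construction; more precisely, $C\otimes_B\kappa(\fp)$ is obtained from the geometrically regular $A$-fiber $C\otimes_A\kappa(\fp_0)$ by the ind-étale, separable base change $\kappa(\fp_0)\to\kappa(\fp)$, hence is again geometrically regular. To make the geometric part precise: for a field extension $L/\kappa(\fp)$ one must show $C\otimes_B L$ is regular; writing it as a base change of $C\otimes_A\kappa(\fp_0)$ along the composite $\kappa(\fp_0)\to L$ (which is separable since $\kappa(\fp_0)\to\kappa(\fp)$ is separable and $L/\kappa(\fp)$ is arbitrary, but one reduces to the separable closure), regularity follows from geometric regularity of $A\to C$ at $\fp_0$. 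I would cite \citetwostacks{030O}{0322} for the behavior of separable extensions and the standard fact (\citestacks{07QI} or similar) that geometric regularity is preserved under separable field base change.

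\emph{Main obstacle.} The crux is the flatness step: filtered colimits of étale maps are very well-behaved, but assembling ``$A\to C$ flat'' plus ``$A\to B$ ind-étale'' into ``$B\to C$ flat'' requires care because flatness is not left-cancellable in general. The clean way out is to observe that $B\to C$ is, locally on $\operatorname{Spec} C$, a base change situation: for $\fq\in\operatorname{Spec} C$ over $\fp$ over $\fp_0$, the map $A_{\fp_0}\to B_\fp$ is a filtered colimit of \emph{local-étale} homomorphisms, and such maps are precisely localizations of étale maps, so $B_\fp = \operatorname{colim} (B_i)_{\fp_i}$ with each $(B_i)_{\fp_i}$ étale-local over $A_{\fp_0}$; then $C_\fq$ is flat over $A_{\fp_0}$, and since étale maps are flat and $C_\fq\otimes_{A_{\fp_0}}(B_i)_{\fp_i}$ is a localization of a flat $C_\fq$-algebra (étale base change), passing to the colimit gives that $C_\fq$ is flat over $B_\fp$ — here one uses that $C_\fq\to C_\fq\otimes_{A_{\fp_0}}B_\fp$ followed by the structural map recovers things correctly, which ultimately is \citestacks{07BZ} applied with the fiber comparison $\Omega_{B/A}=0$. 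Once flatness is in hand, the fiber computation is essentially formal, so I would present flatness carefully and treat the fibers more briskly.
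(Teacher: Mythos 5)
Your overall strategy coincides with the paper's: regularity of a map of Noetherian rings is flatness plus geometrically regular fibers, and you check the two separately, with the flatness half rightly identified as the crux. However, the flatness step is not actually carried out. Tag \Tag{07BZ}, which you cite twice, is just the definition of a regular homomorphism — it contains no flatness cancellation. Your ``passing to the colimit gives that $C_\fq$ is flat over $B_\fp$'' is asserted, not proved: you never explain why the multiplication $C_\fq\otimes_{A_{\fp_0}}B_\fp\to C_\fq$ is flat, which is the whole point. The missing ingredient is that a filtered colimit of \'etale $A$-algebras is \emph{weakly \'etale} (flat, with flat diagonal $B\otimes_A B\to B$); then $C\otimes_A B\to C$ is flat as a base change of the diagonal, $B\to C\otimes_A B$ is flat as a base change of $A\to C$, and their composite is $B\to C$. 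This is exactly what the paper's citations \Tag{092B}, \Tag{092C}, \Tag{092N} package.

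On the fibers your reasoning is in the right spirit but slightly off in its framing: $C\otimes_B\kappa(\fq)$ is not a base change of $C\otimes_A\kappa(\fp)$ along $\kappa(\fp)\to\kappa(\fq)$, but rather a \emph{direct factor} of it, because $B\otimes_A\kappa(\fp)$ is (using Noetherianity of $B$) a finite product of separable algebraic extensions of $\kappa(\fp)$, one of which is $\kappa(\fq)$ (Tag \Tag{0AH1}). The paper then transfers geometric regularity from $\kappa(\fp)$ to $\kappa(\fq)$ via the separable-algebraic residue extension using Tag \Tag{07QH}. Your ``one reduces to the separable closure'' aside is not needed and is a bit of a red herring — geometric regularity only requires checking finite purely inseparable (or finitely generated) extensions. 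With the weakly-\'etale flatness input made explicit and the direct-factor structure of the fiber stated cleanly, your proof would match the paper's.
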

\begin{proof}
Flatness of $B\to C$ follows from \citestacks{092C}, see \citetwostacks{092B}{092N}.
Now we need to show that for each $\fq\in\Spec(B)$ the Noetherian ring $C\otimes_B \kappa(\fq)$ is geometrically regular over $\kappa(\fq)$.
Let $\fp\in\Spec(A)$ be the preimage of $\fq$ in $A$.
The ring $C\otimes_{A}\kappa(\fp)$ is geometrically regular over $\kappa(\fp)$ and the ring $B\otimes_A \kappa(\fp)$ is a finite direct product of separable algebraic extensions of $\kappa(\fp)$, see \citestacks{0AH1}.
Therefore $C\otimes_B \kappa(\fq)$ is a direct factor of $C\otimes_{A}\kappa(\fp)$, and $\kappa(\fq)$ is separable algebraic over $\kappa(\fp)$.
We conclude by \citestacks{07QH}.
\end{proof}

\begin{proof}[Proof of Theorem \ref{thm:ScalarFoverR}]
There is a commutative diagram
\[
\begin{CD}
S@>>> S'\\
@VVV @VVV\\
S^{sh}@>>>S'^{sh}
\end{CD}
\]
where the vertival maps are strict Henselization, see \citestacks{04GU}.
The rings $S^{sh}$ and $S'^{sh}$ are Noetherian by \citestacks{06LJ}; and $S\to S'^{sh}$ is regular by \citetwostacks{07EP}{07QI};
thus $S^{sh}\to S'^{sh}$ is regular by Lemma \ref{lem:DecRegTosh}. 
By flat descent (Lemma \ref{lem:FlatDescentSplinters}) it suffices to show $S'^{sh}$ a splinter.
By Theorem \ref{thm:SplinterEtale},
we may therefore
assume $S$ strictly Henselian.

By Lemmas \ref{lem:RegIntoUltrapower} and \ref{lem:ultrapowerFlat},
there exists a pure ring map $S'\to S_\natural$, where $S_\natural$ is an ultrapower of $S$.
We conclude by Lemmas \ref{lem:FlatDescentSplinters} and \ref{lem:SplinterUltraprod}.
\end{proof}

\begin{proof}[Proof of Corollary \ref{cor:ScalarG}]
We need to show that our $S\to S'$ is regular.
Consider the commutative diagram
\[\begin{CD}
S @>>> S'\\
@VVV @VVV\\
S^\wedge @>>> S'^\wedge
\end{CD}\]
where the verical maps are completion.
The left vertical arrow is regular by the definition of a G-ring and the right vertical arrow is faithfully flat.
By \citetwostacks{07QI}{07NT}
it suffices to show $S^\wedge \to S'^\wedge$ regular.
This follows from \citestacks{07PM}, see \citestacks{0322}.
\end{proof}

\begin{proof}[Proof of Theorem \ref{thm:RegCharp}]
By Popescu's Theorem \ref{thm:Popescu} and Lemma \ref{lem:DlimSplinters}, we may assume $A$ a smooth $S$-algebra.
We must show $\Spl(A)=\Spec(A)$, see Definition \ref{def:LOCUS}.
By Theorem \ref{thm:SplinterEtale} and \citestacks{054L},
we may assume $A$ a polynomial ring over $S$,
and by induction on the number of variables,
it suffices to show $S[Y]$ a splinter, where $Y$ is an indeterminate.

By Corollary \ref{cor:SplinterLocalize},
we may assume $(S,\fm_S,k_S)$ local, and by induction on $\dim S$, that $S[Y]$ is a splinter outside
$Z:=V(\fm_S S[Y]).$
We may also assume $k_S$ separably closed, in particular infinite, by taking the strict Henselization of $S$.
Here we used Theorem \ref{thm:SplinterEtale} and Lemma \ref{lem:FlatDescentSplinters}, and the fact that the strict Henselization of a Noetherian local ring is a Noetherian local ring, see \citestacks{06LJ}.

Note that $S$ is a normal domain (Lemma \ref{lem:SplinterNormal}), hence so is $S[Y]$.
If $S$ contains $\bQ$,
then $S[Y]$ is a splinter by Lemma \ref{lem:normalQ}.
Thus we may assume $S$ is of prime characteristic.

Let $W=\Spl(S[Y])$.
Note that the splinter $S$ is $F$-pure; indeed, the Frobenius map $F:S\to F_*S$ is integral and induces a bijective map of spectra, so it is a direct limit of finite ring maps from $S$ that induces a surjective map of spectra, thus pure (cf. \citestacks{058J}).
Now $W$ is open in $\Spec(S[Y])=\bA^1_S$ by Theorem \ref{thm:OPENsmoverFpure}.
On the other hand, $W$ is stable under translations by all $a\in S$ (in fact all ring automorphisms of $S[Y]$); and
by Theorem \ref{thm:ScalarFoverR},
$S[Y]_{\fm_S S[Y]}$ is a splinter.
Thus $Z\cap W$ is a non-empty open subset of $Z$
stable under translations by all $a\in S$, thus corresponds to a non-empty open subset of $\bA^1_{k_S}$ stable under translations by all $a\in k_S$.
Such a set must be the whole of $\bA^1_{k_S}$ since $k_S$ is infinite.
Therefore $Z\cap W=Z$ and $W=\Spec(S[Y])$ as desired.
\end{proof}

\noindent\emph{Remark}.
The only reason the argument above cannot be applied in mixed characteristic
is that the openness result is not available.
The same proof as above yields Theorem \ref{thm:RegCharp} in arbitrary characteristic (possibly with an additional excellence hypothesis)
if one can prove a result similar to Theorem \ref{thm:OPENsmoverFpure} or \cite[Theorem 1.0.1]{DTopen} in mixed characteristic,
or even just that for every mixed-characteristic
Noetherian complete local splinter $S$, $\Spl(S[Y])$ is open, which in turn implies $\Spl(S[Y])=\Spec(S[Y])$.

However, we also suggest this to be not easy, since Theorem \ref{thm:RegCharp} in arbitrary characteristic implies the direct summand theorem.
See \cite[Remark 4.0.1]{DTetale}.

\end{document}